\newtheorem{theorem}{Theorem}
\newtheorem{lemma}[theorem]{Lemma}
\newtheorem{cor}{Corollary}[theorem]
\newtheorem*{thm}{Theorem}
\begin{document}

\title{solvable normal subgroups of 2-knot groups}
\author{J.A.Hillman}
\address{School of Mathematics and Statistics\\
     University of Sydney, NSW 2006\\
      Australia }

\email{jonathan.hillman@sydney.edu.au}
 
\begin{abstract}
If  $X$ is an orientable,  strongly minimal $PD_4$-complex and
$\pi_1(X)$ has one end then it has no nontrivial locally-finite normal subgroup.
Hence if $\pi$ is a 2-knot group then (a)
if $\pi$ is virtually solvable then either $\pi$ has two ends or $\pi\cong\Phi$,
with presentation $\langle{a,t}|ta=a^2t\rangle$,
or $\pi$ is torsion-free and polycyclic of Hirsch length 4;
(b) either $\pi$ has two ends, or
$\pi$ has one end and the centre $\zeta\pi$ is torsion-free,
or $\pi$ has infinitely many ends and $\zeta\pi$ is finite; and (c) 
the Hirsch-Plotkin radical $\sqrt\pi$ is nilpotent.
\end{abstract}

\keywords{centre, coherent, Hirsch-Plotkin radical, 2-knot, torsion}

\subjclass{57Q45}

\maketitle

A 2-knot is a topologically locally flat embedding of $S^2$ in $S^4$.
The main result of  this note is that if the group of a 2-knot is virtually solvable 
then it either has finite commutator subgroup or is torsion-free.  
All such groups are known, and in the torsion-free case 
the knot exteriors are determined up to homeomorphism by the knot group $\pi$
together with the $Aut(\pi)$-orbit of a meridian.
With the exception of one group, 
such knots may be constructed by elementary surgery on a section of
the mapping torus of a diffeomorphism of a 3-manifold; 
the cocore of the surgery is then a smooth 2-knot in a smooth homotopy 4-sphere.
(The exceptional group is the group $\Phi$ of Examples 10 and 11 in \cite{Fo},
which are ribbon knots.)
In many cases the homotopy 4-sphere is standard,
but the question remains open in general.
(This construction is perhaps the simplest source for possible counter-examples to the
smooth 4-dimensional Poincar\'e conjecture \cite{Go}.)

The first section establishes some notation.
In \S2 we show that if $X$ is an orientable $PD_4$-complex
such that $\pi_1(X)$ has one end and the equivariant intersection pairing 
on $\pi_2(X)$ is 0 then $\pi_1(X)$ has no nontrivial locally-finite normal
subgroup.
This is used in \S3 to show that if a 2-knot group $\pi$ is virtually solvable 
then either $\pi'$ is finite or $\pi\cong\Phi={Z}*_2$, 
with presentation $\langle{a,t}|ta=a^2t\rangle$,
or $\pi$ is torsion-free polycyclic and of Hirsch length 4.
(The final family of such groups was found in \cite{HH}.)
More generally, if $S$ is an infinite solvable normal subgroup 
and $\pi$ is not itself solvable
then $S\cong\mathbb{Z}^2$ 
or is virtually torsion-free abelian of rank 1.
We show also that the Hirsch-Plotkin radical $\sqrt\pi$ of every 2-knot group 
is nilpotent.
Finally we consider the centre $\zeta\pi$.
If $\pi$ has one end then  $\zeta\pi\cong\mathbb{Z}^2$ 
or is torsion-free, of rank $\leq1$.
In particular, this is so if the commutator subgroup $\pi'$ is infinite and $\zeta\pi$ has rank $>0$. 
If $\pi$ has two ends $\zeta\pi$ has rank 1, and may be either $\mathbb{Z}$ or $\mathbb{Z}\oplus{Z/2Z}$,
while if $\pi$ has infinitely many ends $\zeta\pi$ is finite.
We extend a construction of \cite{Yo80} to give examples
with $\sqrt\pi$ cyclic of order $q$ or $2q$, with $q$ odd.

The group $\Phi$ is the only nontrivial higher dimensional knot group 
which is solvable and has deficiency 1.
In \S4 we sketch briefly how the determination of 2-knots 
with this group might extend to other groups with geometric dimension 2, 
and perhaps to all with deficiency 1.
In view of the present limitations on 4-dimensional surgery techniques,
we can expect only a classification up to $s$-concordance 
(relative $s$-cobordism of knots).
With this proviso, we can show that a 2-knot whose group $\pi$ 
is a 1-knot group is determined up to $s$-concordance,
reflections and the Gluck ambiguity 
(whether a knot is determined by its exterior \cite{Gl})
by $\pi$ and the $Aut(\pi)$-orbit of a meridian (Theorem 5).
In particular, a 2-knot  with group $\pi$ and meridian $\mu$  
is $s$-concordant to the Artin spin of a fibred $1$-knot 
if and only if $\pi'$ is finitely generated, 
$C_\pi(\mu)\cong\mathbb{Z}^2$ and $(\pi,C_\pi(\mu))$ is a $PD_3$-pair
(Corollary 6.1).

\section{notation}

If $G$ is a group let $\zeta{G}$ and $ G'$ be the centre
and commutator subgroup of $G$, respectively.
Let $C_G(H)$ be the centralizer of a subgroup $H\leq{G}$.
The product of locally-nilpotent normal subgroups of $G$
is again a locally-nilpotent normal subgroup, 
by the Hirsch-Plotkin Theorem,
and the Hirsch-Plotkin radical $\sqrt{G}$ 
is the (unique) maximal such subgroup.
(See Proposition 12.1.2 of \cite{Ro}.)
In general, $\sqrt{G}$ is elementary amenable, but need not be nilpotent.

If $E$ is an elementary amenable group then it has a well-defined Hirsch length
$h(E)\in\mathbb{N}\cup\{\infty\}$. (See \cite{HL} or Chapter 1 of \cite{Hi}.)

If $X$ is a space let $\beta_i^{(2)}(X)$ be the $i$th $L^2$ Betti number of $X$,
and let $\beta_i^{(2)}(G)=\beta_i^{(2)}(K(G,1))$.
(See \cite{Lu} for a comprehensive exposition of  $L^2$-theory.)

\section{$PD_4$-complexes with $\chi=0$}

A $PD_4$-complex $X$ with fundamental group $\pi$ is {\it strongly minimal} 
if the equivariant intersection pairing on $\pi_2(X)$ is 0,
equivalently, if the homomorphism from $H^2(\pi;\mathbb{Z}[\pi])$
to ${H^2(X;\mathbb{Z}[\pi])}$ induced by the
classifying map $c_X:X\to{K(\pi,1)}$ is an isomorphism \cite{Hi09}.
Note that $X$ is aspherical if and only if it is strongly minimal and $H^i(\pi;\mathbb{Z}[\pi])=0$ for $i\leq2$.

\begin{lemma}
Let $X$ be a finite $PD_4$-complex with fundamental group $\pi$.
If $\chi(X)=0$ and $\beta_1^{(2)}(\pi)=0$ then $X$ is strongly minimal.
\end{lemma}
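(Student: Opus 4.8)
By the characterisation of strong minimality recalled just above, it suffices to show that the equivariant intersection pairing $\lambda_X$ on $\pi_2(X)$ is trivial, equivalently that $c_X^*\colon H^2(\pi;\mathbb{Z}[\pi])\to H^2(X;\mathbb{Z}[\pi])$ is an isomorphism. The plan is first to deduce that every $L^2$-Betti number of $X$ vanishes, and then to derive $\lambda_X=0$ from the vanishing of $\beta_2^{(2)}(X)$ alone.

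For the first step, observe that $\chi(X)=0$ forces $\pi$ to be infinite: otherwise $\widetilde{X}$ would be a finite simply connected $PD_4$-complex with $\chi(\widetilde{X})=|\pi|\,\chi(X)=0$, whereas such a complex has $\chi=2+\mathrm{rank}\,H_2(\widetilde{X})\ge2$. Hence $\beta_0^{(2)}(X)=0$, and $L^2$-Poincar\'e duality gives $\beta_4^{(2)}(X)=0$ and $\beta_3^{(2)}(X)=\beta_1^{(2)}(X)$. Since $\beta_0^{(2)}$ and $\beta_1^{(2)}$ depend only on $\pi$ (see \cite{Lu}; the classifying map $c_X$ induces an isomorphism on $\mathcal{N}\pi$-homology in degrees $\le1$), we get $\beta_1^{(2)}(X)=\beta_1^{(2)}(\pi)=0$, and therefore also $\beta_3^{(2)}(X)=0$. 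Atiyah's $L^2$-index theorem then yields $0=\chi(X)=\sum_i(-1)^i\beta_i^{(2)}(X)=\beta_2^{(2)}(X)$.

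For the second step I would extend scalars along the inclusion $\mathbb{Z}[\pi]\hookrightarrow\mathcal{N}\pi$ of the group ring into the group von Neumann algebra \cite{Lu}. The equivariant intersection pairing is natural under change of coefficients, and $\mathbb{Z}[\pi]\to\mathcal{N}\pi$ is injective, so $\lambda_X=0$ if and only if the induced $\mathcal{N}\pi$-valued pairing $\overline{\lambda}$ on $H_2(X;\mathcal{N}\pi)$ vanishes. Now $\overline{\lambda}$ is the composite of the Poincar\'e duality isomorphism $H_2(X;\mathcal{N}\pi)\cong H^2(X;\mathcal{N}\pi)$ with the evaluation homomorphism $H^2(X;\mathcal{N}\pi)\to\mathrm{Hom}_{\mathcal{N}\pi}(H_2(X;\mathcal{N}\pi),\mathcal{N}\pi)$. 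But $\dim_{\mathcal{N}\pi}H_2(X;\mathcal{N}\pi)=\beta_2^{(2)}(X)=0$, and a nonzero homomorphism from an $\mathcal{N}\pi$-module of dimension $0$ to $\mathcal{N}\pi$ would have as image a nonzero submodule of $\mathcal{N}\pi$, which necessarily has positive dimension; so the evaluation map is zero. Hence $\overline{\lambda}=0$ and $\lambda_X=0$.

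I expect the delicate point to be precisely this last step: identifying $\lambda_X$ with the composite of Poincar\'e duality and evaluation, verifying that this description is natural for the passage from $\mathbb{Z}[\pi]$ to $\mathcal{N}\pi$, and invoking the dimension-theoretic fact that $\mathcal{N}\pi$ contains no nonzero submodule of dimension zero. An equivalent, more analytic, formulation of the same step runs through reduced $L^2$-homology: $\beta_2^{(2)}(X)=0$ forces the Hilbert $\mathcal{N}\pi$-module $\overline{H}_2^{(2)}(X)$ to be trivial, and the equivariant pairing, being computed from $\ell^2$-harmonic representatives of the classes in $\pi_2(X)$, then vanishes automatically.
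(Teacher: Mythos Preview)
Your argument is correct and follows the same overall route as the paper: both use the $L^2$-Euler characteristic formula together with $\beta_1^{(2)}(X)=\beta_1^{(2)}(\pi)=0$ to obtain $\beta_2^{(2)}(X)=0$, and then deduce strong minimality from this. The only difference is in the last step: the paper observes that $\beta_2^{(2)}(X)=\beta_2^{(2)}(\pi)$ and invokes Theorem~3.4(3) of \cite{Hi} to conclude that $H^2(c_X;\mathbb{Z}[\pi])$ is an isomorphism, whereas you give a self-contained argument that $\beta_2^{(2)}(X)=0$ forces $\lambda_X=0$ by passing to $\mathcal{N}\pi$-coefficients and using that $\mathcal{N}\pi$ has no nonzero submodule of von Neumann dimension zero. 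Your direct dimension-theoretic argument is essentially what underlies the cited theorem, so the two proofs are really the same in spirit; yours simply unpacks the black box.
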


\begin{proof}
Since $X$ is a finite complex the $L^2$-Euler characteristic formula holds,
and so $\chi(X)=\beta_2^{(2)}(X)-2\beta_1^{(2)}(X)$.
Hence $\beta_2^{(2)}(X)=0$ also.
Since $\beta_1^{(2)}(X)=\beta_1^{(2)}(\pi)=0$,
$\beta_2^{(2)}(X)\geq\beta_2^{(2)}(\pi)\geq0$ and $\chi(X)=0$,
it follows that $\beta_2^{(2)}(X)=\beta_2^{(2)}(\pi)$.
Hence $H^2(c_X;\mathbb{Z}[\pi])$ is an isomorphism, 
by part (3) of Theorem 3.4 of \cite{Hi}.
\end{proof}

The condition $\beta_1^{(2)}(\pi)=0$ holds if $\pi$ has an infinite normal subgroup which is amenable,
or is finitely generated and of infinite index.
(See Chapter 7 of \cite{Lu}.)

\begin{lemma}
Let $G$ be a group.
If $T$ is a locally finite normal subgroup of $G$ then $T$ 
acts trivially on $H^j(G;\mathbb{Z}[G])$, for all $j\geq0$.
\end{lemma}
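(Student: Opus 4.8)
The plan is to follow the action of $T$ through the Lyndon--Hochschild--Serre spectral sequence of $1\to T\to G\to G/T\to1$ and reduce to an elementary computation on invariants. Compute $H^j(G;\mathbb{Z}[G])$ from $\mathrm{Hom}_{\mathbb{Z}[G]}(C_\ast,\mathbb{Z}[G])$ for a free resolution $C_\ast\to\mathbb{Z}$, using the \emph{left} module structure of the coefficients; the relevant $G$-action is the one induced by \emph{right} multiplication of $G$ on the bimodule $\mathbb{Z}[G]$, which commutes with the left action. Right multiplication therefore acts on the whole LHS spectral sequence
\[
E_2^{pq}=H^p\!\bigl(G/T;H^q(T;\mathbb{Z}[G])\bigr)\ \Longrightarrow\ H^{p+q}(G;\mathbb{Z}[G]),
\]
compatibly with the filtration of the abutment, and on $E_2^{pq}$ it acts only through its action on the coefficients $H^q(T;\mathbb{Z}[G])$. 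So it suffices to show $T$ acts trivially on $H^q(T;\mathbb{Z}[G])$ for every $q$; as the spectral sequence will turn out to collapse onto a single row, there is no extension problem.

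The core computation is for a \emph{finite} subgroup $F\leq T$. Then $\mathbb{Z}[G]$ is free as a left $\mathbb{Z}[F]$-module on a set of representatives of $F\backslash G$, so $H^q(F;\mathbb{Z}[G])=0$ for $q\geq1$, while $H^0(F;\mathbb{Z}[G])=\mathbb{Z}[G]^F$ is free abelian with basis the elements $N_F\bar g$, where $N_F=\sum_{f\in F}f$ and $\bar g$ runs over those representatives. Right multiplication by $t^{-1}$, for $t\in F$, sends $N_F\bar g$ to $N_F(\bar g\,t^{-1}\bar g^{-1})\bar g$, which equals $N_F\bar g$ as soon as $\bar g\,t^{-1}\bar g^{-1}\in F$; this holds automatically when $F$ is normal in $G$. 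Normality enters exactly here. In particular, when $T$ itself is finite the spectral sequence collapses onto the bottom row $E_2^{p,0}=H^p(G/T;\mathbb{Z}[G]^T)$ and $T$ acts trivially on $\mathbb{Z}[G]^T$, proving the lemma in that case.

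For general locally finite $T=\varinjlim_F F$ I would bootstrap from the finite subgroups. Feeding $H^q(F;\mathbb{Z}[G])=0$ ($q\geq1$) into the Milnor $\varprojlim$--$\varprojlim^1$ sequence for the cohomology of a direct limit of groups shows that $H^q(T;\mathbb{Z}[G])$ is concentrated in degrees $q\leq1$, equal to $\mathbb{Z}[G]^T$ in degree $0$ (which is $0$ when $T$ is infinite, since then $\bigcap_F\mathbb{Z}[G]^F=\mathbb{Z}[G]^T=0$) and to $\varprojlim^1_F\mathbb{Z}[G]^F$ in degree $1$. So again the spectral sequence collapses onto a single row, and it remains to show that $T$ acts trivially on $\varprojlim^1_F\mathbb{Z}[G]^F$.

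This last step is where I expect the real work. Right multiplication by $t^{-1}$ identifies $\mathbb{Z}[G]^F$ with the permutation module $\mathbb{Z}[F\backslash G]$, on which $t$ acts by right translation; this is not trivial on any single $F$ that fails to contain the $G$-conjugates of $t$, so one cannot argue term by term. But for each fixed $g$ the conjugate $\bar g\,t^{-1}\bar g^{-1}$ lies in all sufficiently large finite subgroups of $T$, so ``right multiplication by $t^{-1}$ minus the identity'' is pro-null on the inverse system $\{\mathbb{Z}[G]^F\}$, and the point is to see that this forces its vanishing on $\varprojlim^1$. Should that be awkward, one could instead observe that this endomorphism is nilpotent on $H^j(G;\mathbb{Z}[G])$ (because of the single-row filtration) while right multiplication by $t^{-1}$ has finite order, and try to extract triviality from the abelian-group structure of $H^j(G;\mathbb{Z}[G])$. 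Everything before this step is formal.
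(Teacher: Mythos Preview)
Your outline is the paper's argument almost verbatim: run the LHS spectral sequence for $1\to T\to G\to G/T\to1$, use the $\varprojlim$--$\varprojlim^1$ sequence over a chain $T=\bigcup T_n$ of finite subgroups to see that $H^q(T;\mathbb{Z}[G])$ vanishes for $q\neq0,1$ (and for $q=0$ when $T$ is infinite), collapse to a single row, and reduce to showing that each $g\in T$ acts trivially on $\varprojlim^1 H^0(T_n;\mathbb{Z}[G])$ via right multiplication. Where you stop and label ``the real work'', the paper simply chooses the cofinal chain so that $g\in T_n$ for every $n$ and asserts that $g$ then acts trivially on each $H^0(T_n;\mathbb{Z}[G])$, concluding by functoriality of $\varprojlim^1$.

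Your own computation shows why that termwise assertion is not immediate: right multiplication by $g$ fixes the basis element $N_{T_n}h$ only when $hgh^{-1}\in T_n$, and this need not hold for arbitrary $h\in G$ unless $T_n$ is normal in $G$. In the lamplighter group $G=(\bigoplus_{\mathbb{Z}}\mathbb{Z}/2)\rtimes\mathbb{Z}$ with $T_n=\bigoplus_{|i|\le n}\mathbb{Z}/2$ and $g=e_0$, the element $N_{T_n}s^{n+1}$ is genuinely moved. So the paper's proof, read literally, has the same gap you have flagged; it does not supply an argument beyond the termwise claim, and you should not expect to find the missing idea there.

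Your two proposed repairs also fail as stated. The endomorphism $\rho_g-\mathrm{id}$ is not pro-null on the inverse system $\{(\mathbb{Z}[G])^{T_n}\}$: since the transition maps are inclusions, pro-nullity would force $\rho_g=\mathrm{id}$ on some $(\mathbb{Z}[G])^{T_m}$, i.e.\ the entire $G$-normal closure of $g$ would have to lie in the finite group $T_m$, which the lamplighter example rules out. And a spectral sequence that collapses to a single row yields an isomorphism, not a nontrivial filtration, so there is no nilpotency of $\rho_g-\mathrm{id}$ to extract from it. If you want to close the gap you will need a genuinely new idea for the action on $\varprojlim^1$, not a formality.
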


\begin{proof} 
If $T$ is finite then  $H^j(G;\mathbb{Z}[G])\cong{H^j}(G/T;\mathbb{Z}[G/T]))$,
for all $j$, and the result is clear.
Thus we may assume that $T$ and $G$ are infinite.
Hence $H^0(G;\mathbb{Z}[G])=0$, and $T$ acts trivially.
We may write $T=\cup_{n\geq1}{T_n}$ as a strictly increasing union of finite subgroups. 
Then there are short exact sequences \cite{Je}
\[
0\to{\varprojlim}^1{H^{s-1}(T_n;\mathbb{Z}[\pi])}\to{H^s(T;\mathbb{Z}[\pi])}\to
\varprojlim{H^s(T_n;\mathbb{Z}[\pi])}\to0.
\]
Hence $H^s(T;\mathbb{Z}[\pi])=0$ if $s\not=1$ and 
$H^1(T;\mathbb{Z}[\pi])={\varprojlim}^1H^0(T_n;\mathbb{Z}[\pi])$,
and so the LHS spectral sequence collapses to give
$H^j(G;\mathbb{Z}[G])\cong{H^{j-1}}(G/T;H^1(T;\mathbb{Z}[G]))$,
for all $j\geq1$.
Let $g\in{T}$. 
We may assume that $g\in{T_n}$ for all $n$, and so $g$ acts trivially on $H^0(T_n;\mathbb{Z}{G})$,
for all $j$ and $n$.
But then $g$ acts trivially on ${\varprojlim}^1H^0(T_n;\mathbb{Z}[\pi])$,
by the functoriality of the construction.
Hence every element of $T$ acts trivially on $H^{j-1}(G/T;H^1(T;\mathbb{Z}[G]))$,
for all $j\geq1$.
\end{proof}

If $\pi$ is a group and $M$ is a right $\mathbb{Z}[\pi]$-module let $\overline{M}$ 
be the conjugate left module, with $\pi$-action defined by $g.m=mg^{-1}$ for all $m\in{M}$ and
$g\in\pi$.

\begin{theorem}
Let $X$ be an orientable $PD_4$-complex with fundamental group $\pi$.
If $X$ is strongly minimal and $\pi$ has one end then 
$\pi$ has no non-trivial locally-finite normal subgroup.
\end{theorem}

\begin{proof}
Let $\widetilde{X}$ be the universal cover of $X$,
and let $C_*=C_*(X;\mathbb{Z}[\pi])=C_*(\widetilde{X};\mathbb{Z})$
be the cellular chain complex of $\widetilde{X}$.
We assume that $\pi$ acts on the left on $\widetilde{X}$, 
and so $C_*$ is a complex of free left $\mathbb{Z}[\pi]$-modules.
Since $\pi$ has one end,  $H_s(\widetilde{X};\mathbb{Z})=H_s(X;\mathbb{Z}[\pi])=0$ for $s\not=0$ or 2.
Let $\Pi=\pi_2(X)\cong{H_2(\widetilde{X};\mathbb{Z})}=H_2(X;\mathbb{Z}[\pi])$.
Poincar\'e duality and $c_X$ give an isomorphism $\Pi\cong\overline{H^2(X;\mathbb{Z}[\pi])}$.
Since $X$ is strongly minimal, this in turn is isomorphic to $\overline{H^2(\pi;\mathbb{Z}[\pi])}$.

Suppose that $\pi$ has a nontrivial locally-finite normal subgroup $T$.
Let $g\in{T}$ have prime order $p$, and let $C=\langle{g}\rangle\cong{Z/pZ}$.
Then $C$ acts freely on $\widetilde{X}$,
which has homology only in degrees 0 and 2.
On considering the homology spectral sequence for the classifying map
$c_{\widetilde{X}/C}:\widetilde{X}/C\to{K(C,1)}$, 
we see that $H_{i+3}(C;\mathbb{Z})\cong H_i(C;\Pi)$, for all $i\geq2$.
(See Lemma 2.10 of \cite{Hi}.)
Since $C$ has cohomological period 2 and acts trivially on $\Pi$, 
by Lemma 1,
there is an exact sequence 
\[
0\to{Z/pZ}\to\Pi\to\Pi\to0.
\]
On the other hand, since $\pi$ is finitely presentable,
$\Pi\cong{H^2(\pi;\mathbb{Z}[\pi])}$ is torsion-free,
by Proposition 13.7.1 of \cite{Ge}.
Hence $T$ has no such element $g$ and so $\pi$ has no such finite normal subgroup.
\end{proof}
               
\begin{cor}
Every ascending locally finite subgroup of $\pi$ is trivial.
\end{cor}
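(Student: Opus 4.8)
The plan is to deduce the corollary from the theorem by a purely group-theoretic remark, with no further homological input. The class of locally finite groups is closed under subgroups, quotients and extensions (a finitely generated subgroup of $NM$, with $N,M$ locally finite normal subgroups of a group $G$, has finite image in the locally finite group $NM/N$ and hence finitely generated, therefore finite, kernel in $N$, so is itself finite); consequently the product of any two locally finite normal subgroups is again such, and $G$ has a unique maximal locally finite normal subgroup $\rho(G)$, its locally finite radical. Moreover, exactly as for the Hirsch--Plotkin radical and locally nilpotent subgroups (cf.\ \cite{Ro}), every ascendant locally finite subgroup of $G$ is contained in $\rho(G)$; equivalently, for such a subgroup $H$ the normal closure $H^{G}$, being the join of the subgroups $gHg^{-1}$ --- each again ascendant and locally finite --- is itself locally finite.

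Granting this, the corollary is immediate: if $H$ is an ascendant locally finite subgroup of $\pi$ then $H\leq\rho(\pi)$, and $\rho(\pi)$ is a locally finite normal subgroup of $\pi$, hence trivial by the theorem, so $H=1$. The same remark also upgrades Lemma 1: an ascendant locally finite subgroup of an arbitrary group $G$ acts trivially on $H^{j}(G;\mathbb{Z}[G])$ for every $j\geq0$, since it lies in $\rho(G)$, which is normal in $G$ and to which Lemma 1 applies.

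If one preferred to avoid this (admittedly standard) group theory, the alternative would be to re-run the proof of the theorem directly. There, normality of the locally finite subgroup entered only through Lemma 1 --- to conclude that an element $g$ of prime order acts trivially on $\Pi\cong\overline{H^{2}(\pi;\mathbb{Z}[\pi])}$ --- after which the free action of $\langle g\rangle$ on $\widetilde{X}$, the collapse of the homology spectral sequence to $0\to\mathbb{Z}/p\mathbb{Z}\to\Pi\to\Pi\to0$, and the contradiction with the torsion-freeness of $\Pi$ all go through unchanged. One would then want an ascendant refinement of Lemma 1, proved by transfinite induction along an ascending series $T=T_{0}\trianglelefteq T_{1}\trianglelefteq\cdots\trianglelefteq T_{\alpha}=G$: Lemma 1 itself gives the first step $T\trianglelefteq T_{1}$, a Lyndon--Hochschild--Serre argument should propagate triviality of the $T$-action from $T_{\gamma}$ to $T_{\gamma+1}$, and one takes unions at limit ordinals. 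The main obstacle in this second approach --- and the reason I would present the first --- is that the intermediate groups $T_{\gamma}$ need not be finitely presentable, so the spectral sequences involved and the identification of $H^{*}(T_{\gamma};\mathbb{Z}[T_{\gamma+1}])$ are not automatically available; passing to $\rho(\pi)$ sidesteps this entirely.
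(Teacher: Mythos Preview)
Your proof is correct and takes essentially the same approach as the paper's: both observe that the normal closure of an ascendant locally finite subgroup is again locally finite---you via the locally finite radical and the analogy with the Hirsch--Plotkin result in \cite{Ro}, the paper via a one-line appeal to transfinite induction along the ascending series---and then invoke Theorem~3. Your supplementary remarks (upgrading Lemma~2 and the rejected direct approach) go beyond what the paper records, but the core argument is the same.
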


\begin{proof}
If $T$ is an ascending  locally finite subgroup of $\pi$ then 
a transfinite induction shows that the normal closure of $T$ in $\pi$ is locally finite.
\end{proof}

Theorems 8.1 and 9.1 of \cite{Hi} are the bases for characterizations 
of the homotopy types of 4-dimensional infrasolvmanifolds and Seifert fibred 4-manifolds,
respectively.
Each assumes at some point that $H^2(\pi;\mathbb{Z}[\pi])=0$.
We have long felt that this condition might be redundant.
The results above justify this expectation for Theorem 9.1,
which may now be stated as follows.
 
\begin{thm} 
Let $M$ be a closed $4$-manifold with fundamental group $\pi$.
If $\chi(M)=0$ and $\pi$ has an elementary amenable normal subgroup $\rho$ with
$h(\rho)=2$ and $[\pi:\rho]=\infty$ then $M$ is aspherical and $\rho$ is virtually abelian. 
\qed
\end{thm}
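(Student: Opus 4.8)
The plan is to extract from Theorem 3 the cohomological condition $H^2(\pi;\mathbb{Z}[\pi])=0$ which was assumed in Theorem 9.1 of \cite{Hi}, and then to quote the rest of that argument. We may assume $M$ is orientable, for otherwise we pass to the orientation double cover: its Euler characteristic is again $0$, its fundamental group meets $\rho$ in an elementary amenable normal subgroup of Hirsch length $2$ and infinite index, and asphericity of $M$ and virtual abelianness of $\rho$ are equivalent to the corresponding assertions for that cover. Since $\rho$ is an infinite amenable normal subgroup, $\beta_1^{(2)}(\pi)=0$; so $M$, being a finite $PD_4$-complex with $\chi(M)=0$, is strongly minimal, by Lemma~1. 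Since $\pi$ is finitely presentable and has an infinite amenable normal subgroup of infinite index, it has one end, and hence $H^s(\pi;\mathbb{Z}[\pi])=0$ for $s\leq1$. By Theorem 3, $\pi$ has no nontrivial locally-finite normal subgroup.

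Hence the maximal locally-finite normal subgroup of $\rho$, and the torsion subgroup of the Hirsch--Plotkin radical $A=\sqrt\rho$, are characteristic in $\rho$, therefore normal in $\pi$, and so trivial. Thus $A$ is torsion-free; being locally nilpotent of Hirsch length at most $2$, it is abelian of rank $1$ or $2$. By the structure theory of elementary amenable groups of finite Hirsch length (Chapter~1 of \cite{Hi}), $\rho$ is virtually torsion-free solvable and has a characteristic subgroup $\sigma$ of finite index which is torsion-free solvable of Hirsch length $2$; such a $\sigma$ is $\mathbb{Z}^2$, the Klein bottle group, or a group $\Lambda\rtimes_\theta\mathbb{Z}$ with $\Lambda$ a rank-$1$ subgroup of $\mathbb{Q}$. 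Being characteristic in $\rho$, $\sigma$ is normal in $\pi$; it is infinite, one-ended, and of infinite index in $\pi$; and if $\rho$ is finitely generated then $\sigma$ is finitely presentable and, having cohomological dimension $2$, is of type $FP$.

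Suppose $\rho$ is finitely generated. I would compute $H^2(\pi;\mathbb{Z}[\pi])$ using the Lyndon--Hochschild--Serre spectral sequence of $1\to\sigma\to\pi\to\pi/\sigma\to1$ with coefficients $\mathbb{Z}[\pi]$. As $\mathbb{Z}[\pi]$ is free over $\mathbb{Z}[\sigma]$ and $\sigma$ is of type $FP$, $H^q(\sigma;\mathbb{Z}[\pi])\cong\bigoplus_{\pi/\sigma}H^q(\sigma;\mathbb{Z}[\sigma])$; as $\sigma$ is infinite and one-ended, $H^0(\sigma;\mathbb{Z}[\sigma])=H^1(\sigma;\mathbb{Z}[\sigma])=0$; and as $\pi/\sigma$ is infinite, a module induced up from the trivial subgroup has no $\pi/\sigma$-invariants. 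Thus the entries $E_2^{0,2}$, $E_2^{1,1}$ and $E_2^{2,0}$ all vanish, and $H^2(\pi;\mathbb{Z}[\pi])=0$. Together with strong minimality and the vanishing of $H^s(\pi;\mathbb{Z}[\pi])$ for $s\leq1$, this shows that $M$ is aspherical, so $\pi$ is a $PD_4$-group. From here the argument of Theorem 9.1 of \cite{Hi} applies: $\sigma$ is a normal subgroup of type $FP$ and of infinite index in a $PD_4$-group, so it is a $PD_2$-group, hence, being solvable, is $\mathbb{Z}^2$ or the Klein bottle group. Therefore $\rho$ is virtually $\mathbb{Z}^2$, in particular virtually abelian.

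The point I expect to be the main obstacle is the case in which $\rho$ --- equivalently $A=\sqrt\rho$ --- is not finitely generated; then $A$ is a non-finitely-generated rank-$1$ subgroup of $\mathbb{Q}$ normal in $\pi$, $\sigma$ need not be of type $FP_2$, and the identification of $H^q(\sigma;\mathbb{Z}[\pi])$ above may break down. I would dispose of this case by showing that such an $A$ cannot occur. The centralizer $C=C_\pi(A)$ is normal in $\pi$ with $A\leq\zeta C$, and $\pi/C$ embeds in $\mathrm{Aut}(A)\leq\mathbb{Q}^{\times}$, hence is finitely generated abelian of finite rank; and the torsion subgroup of $C$, being characteristic and locally finite, is trivial, so $C$ is torsion-free. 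I would try to argue that a closed orientable $4$-manifold with $\chi=0$ whose fundamental group contains such a subgroup must, after passing to a finite cover, fibre over the circle or a surface in a way forcing $A$ to be finitely generated after all; alternatively, writing $A$ as an increasing union of infinite cyclic subgroups and passing to the limit in the computation of $H^2(\pi;\mathbb{Z}[\pi])$ should again give $H^2(\pi;\mathbb{Z}[\pi])=0$ and reduce to the previous paragraph. Making one of these routes rigorous is the one real gap in the plan.
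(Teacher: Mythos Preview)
Your overall strategy matches the paper's: use Lemma~1 and Theorem~3 to see that $M$ is strongly minimal, $\pi$ has one end, and the maximal locally-finite normal subgroup $\tau$ of $\rho$ vanishes; deduce $H^2(\pi;\mathbb{Z}[\pi])=0$; and then invoke the argument of Theorem~9.1 of \cite{Hi}. The gap you flag in the non--finitely-generated case is real, and neither of your suggested fixes is persuasive: there is no reason to expect a fibration before asphericity is established, and the limit computation of $H^2$ is precisely the missing cohomological input.

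The paper closes this gap with a single citation rather than a case split. Once $\tau=1$, the structure theorem of \cite{HL} gives that $\rho$ is virtually solvable, and hence $\pi$ has a nontrivial torsion-free abelian normal subgroup (e.g.\ the last nontrivial term of the derived series of a characteristic solvable subgroup of $\rho$, which is characteristic in $\rho$ and hence normal in $\pi$). The paper then invokes Theorem~1.17 of \cite{Hi}, which yields $H^2(\pi;\mathbb{Z}[\pi])=0$ directly from the presence of such a subgroup together with the data already in hand; crucially, that theorem does not require the abelian normal subgroup to be finitely generated, so it handles your problematic case and your easy case uniformly, without any LHS spectral sequence computation. With $H^s(\pi;\mathbb{Z}[\pi])=0$ for $s\leq2$ and $M$ strongly minimal, $M$ is aspherical, and the remainder of Theorem~9.1 of \cite{Hi} gives that $\rho$ is virtually abelian. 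So the ingredient you are missing is not a new argument but the existing Theorem~1.17 of \cite{Hi}; your direct spectral-sequence route in the finitely generated case is correct in outline but unnecessary once that theorem is invoked.
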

 
The key points are that $\pi$ has one end, since $h(\rho)>1$,
and  the quotient of $\rho$ by its maximal locally-finite 
normal subgroup $\tau$ is virtually solvable,
since $h(\rho)<\infty$.
Hence $\tau=1$, so $\rho$ is virtually solvable, 
and then $\pi$ has a nontrivial torsion-free abelian normal subgroup.
Theorem 1.17 of \cite{Hi} may then be used to show 
that $H^2(\pi;\mathbb{Z}[\pi])=0$,
and the rest of the argument is as in \cite{Hi}.
 
We do not yet have a corresponding improvement of Theorem 8.1,
as we do not know how to show that  $H^2(\rho;W)=0$ 
if $\rho$ is elementary amenable, 
has no nontrivial locally-finite normal subgroup and $h(\rho)=\infty$,
and $W$ is a free $\mathbb{Z}[\rho]$-module.
 (This holds if $\rho$ is torsion-free \cite{Kr}.)

\section{centres, hirsch-plotkin radicals and virtually solvable 2-knot groups}

We work with topological manifolds and locally flat embeddings.
Thus a 2-knot is an embedding $K:S^2\to{S^4}$ which extends to 
a product neighbourhood $N(K)\cong{S^2\times{D^2}}$.
The knot exterior is $X(K)=\overline{S^4\setminus{N(K)}}$,
and the knot group is $\pi=\pi{K}=\pi_1(X(K))$.
Let $M(K)=X(K)\cup{D^3\times{S^1}}$ be the closed 4-manifold 
obtained by elementary surgery on $K$.
Then $\pi_1(M(K))\cong\pi$ and $\chi(M(K))=0$.

We can recover the exterior from the knot manifold
$M(K)$ by specifying the isotopy class of the core $\{0\}\times{S^1}$.
By general position, this is determined by the conjugacy class of a meridian.
A {\it weight orbit\/} in a knot group $\pi$ is the orbit 
of a normal generator under the action of $Aut(\pi)$.
If each automorphism of $\pi$ is realizable by a self-homeomorphism 
of $M(K)$ then $X(K)$ is determined by $M(K)$ together with 
the weight orbit of a meridian for $K$.
(In general, a 2-knot group may have many distinct weight orbits,
corresponding to distinct knots giving rise to the same knot manifold.
See \S7 of Chapter 14 of \cite{Hi}.)

If we reattach ${S^2\times{D^2}}$ to $X(K)$
via an identification $\partial{X(K)}\cong{S^2\times{S^1}}$
we may recover a 2-knot, represented by the core $S^2\times\{0\}$. 
However there is a potential ambiguity of order 2 here, 
since $S^n\times{S^1}$ has an involution which does not extend 
across $S^n\times{D^2}$, when $n>1$ \cite{Gl}. 
If this involution extends across $X(K)$ then $K$ is determined by its exterior;
we say that $K$ is {\it reflexive}.

\begin{theorem}
Let $K$ be a $2$-knot with group $\pi=\pi{K}$.
If $\pi$ has normal subgroups $A\leq{E}$ with $A$ a nontrivial abelian group
and $E$ an infinite elementary amenable group 
then either $\pi'$ is finite or $E$ is virtually torsion-free solvable.
If $h(E)=1$ then $E$ is abelian or virtually $\mathbb{Z}$;
if $h(E)=2$ then $E\cong\mathbb{Z}^2$,
and if $h(E)>2$ then $E$ is torsion-free polycyclic, and $h(E)=3$ or $4$.
\end{theorem}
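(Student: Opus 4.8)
The plan is to study $\pi$ through the closed orientable $4$-manifold $M=M(K)$, for which $\pi_1(M)=\pi$ and $\chi(M)=0$. Since $E$ is an infinite amenable normal subgroup of $\pi$ we have $\beta_1^{(2)}(\pi)=0$, so Lemma~1 applies and $M$ is strongly minimal. Also $\pi$ has one or two ends, since a finitely presentable group with an infinite amenable normal subgroup has at most two ends.

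Suppose first that $\pi$ has two ends. Then $\pi$ is virtually $\mathbb{Z}$, so (as $\pi/\pi'\cong\mathbb{Z}$) $\pi'$ is finite, and $E$ is infinite and virtually $\mathbb{Z}$; all the assertions then hold. So assume henceforth that $\pi$ has one end. By the Theorem of \S2, $\pi$ has no nontrivial locally-finite normal subgroup. Applying this to the torsion subgroup of $A$ --- characteristic in $A$, hence normal in $\pi$ --- we see that $A$ is a nontrivial \emph{torsion-free} abelian normal subgroup of $\pi$; applying it to the maximal locally-finite normal subgroup of $E$ --- characteristic in $E$, hence normal in $\pi$ --- we see that $E$ has trivial locally-finite radical. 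Granting for the moment that $h(E)<\infty$, the structure theory of elementary amenable groups then shows that $E$ is virtually a torsion-free solvable group, which gives the second alternative. The subgroup $A$ supplies the nontrivial torsion-free abelian normal subgroup through which the results of \cite{Hi} --- in particular Theorem~1.17 and the reformulation of Theorem~9.1 of \cite{Hi} displayed above --- can be applied.

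It remains to carry out the Hirsch-length analysis. If $h(E)=1$, then an elementary amenable group of Hirsch length $1$ with trivial locally-finite radical is torsion-free abelian of rank $1$ or virtually $\mathbb{Z}$. If $h(E)=2$: when $[\pi:E]=\infty$, the reformulated Theorem~9.1 applies with $\rho=E$, so $M$ is aspherical and $E$ is virtually abelian, hence $E\cong\mathbb{Z}^2$ (it is torsion-free); when $[\pi:E]<\infty$, $\pi$ is itself virtually torsion-free solvable of Hirsch length $2$, and one identifies $\pi$ from the constraints $\pi/\pi'\cong\mathbb{Z}$, $H_2(\pi;\mathbb{Z})=0$ and weight one --- this is where $\Phi$ enters. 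If $h(E)\geq3$: using $A$ together with the structure of $E$ (for instance a term of the Hirsch--Plotkin, derived, or upper central series of $E$ that is normal in $\pi$), one shows $H^2(\pi;\mathbb{Z}[\pi])=0$, so by strong minimality $M$ is aspherical and $\pi$ is a $PD_4$-group. An elementary amenable normal subgroup of a $PD_4$-group has Hirsch length at most $4$, and if it is $3$ or $4$ it is a $PD_3$- or $PD_4$-group (its quotient being virtually $\mathbb{Z}$ or finite), hence virtually poly-$\mathbb{Z}$; feeding this back through the $2$-knot hypotheses yields that $E$ is torsion-free polycyclic with $h(E)\in\{3,4\}$. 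In particular $h(E)=\infty$ cannot occur, which retroactively justifies the assumption $h(E)<\infty$.

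The main obstacle I anticipate is the fine structure in the small Hirsch-length cases: distinguishing the generic conclusion $E\cong\mathbb{Z}^2$ from the exceptional group $\Phi$ when $h(E)=2$ and $[\pi:E]<\infty$, and excluding torsion-free solvable non-polycyclic (or non-solvable elementary amenable) normal subgroups when $h(E)$ is $3$ or $4$. Each reduces to the statement that a $2$-knot group cannot contain a ``large'' elementary amenable normal subgroup that fails to be polycyclic; proving this needs the asphericity input above combined with the module-theoretic restrictions imposed by $\pi/\pi'\cong\mathbb{Z}$, $H_2(\pi;\mathbb{Z})=0$, weight one, and the known structure of the Poincar\'e duality groups that arise.
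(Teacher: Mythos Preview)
Your setup matches the paper: pass to $M(K)$, use Lemma~1 for strong minimality, and apply Theorem~3 to make $A$ torsion-free. But the core of the paper's proof is a step you omit entirely: once $A$ is a nontrivial torsion-free abelian normal subgroup, Theorem~15.7 of \cite{Hi} gives a trichotomy --- either $\pi\cong\Phi$, or $M(K)$ is aspherical, or $A\cong\mathbb{Z}$ and $\pi/A$ has infinitely many ends. Each branch immediately forces $E$ to be virtually torsion-free solvable \emph{and} bounds $h(E)$ (respectively $h(E)\le2$; $c.d.\,E<\infty$ hence $h(E)<\infty$; $E/A$ finite so $h(E)=1$). The Hirsch-length analysis then proceeds with these bounds already in hand, invoking Theorems~8.1, 9.1 and 16.2 of \cite{Hi} for the cases $h(E)>2$ and $h(E)=2$.

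Your workaround --- assume $h(E)<\infty$, deduce virtually solvable from \cite{HL}, and justify $h(E)<\infty$ retroactively by showing $h(E)\ge3\Rightarrow M$ aspherical --- has a genuine gap. The retroactive step requires proving $H^2(\pi;\mathbb{Z}[\pi])=0$ from an elementary amenable normal subgroup with $h\ge3$ but possibly $h=\infty$, and the paper explicitly flags this as open (see the remark following the reformulated Theorem~9.1: ``we do not know how to show that $H^2(\rho;W)=0$ if $\rho$ is elementary amenable, has no nontrivial locally-finite normal subgroup and $h(\rho)=\infty$''). So your ``one shows $H^2(\pi;\mathbb{Z}[\pi])=0$'' is exactly the step that is not available without first bounding $h(E)$ via the trichotomy. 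Similarly, in your $h(E)=1$ case the claim ``Hirsch length $1$ with trivial locally-finite radical implies torsion-free abelian of rank $1$ or virtually $\mathbb{Z}$'' is false in general (consider $\mathbb{Q}\rtimes\mathbb{Z}/2\mathbb{Z}$); the paper instead uses Theorem~15.7 again to get $\pi$ torsion-free in the non-finitely-generated subcase, forcing $E=\sqrt{E}$ to be abelian. The obstacles you anticipate in your last paragraph are precisely what Theorems~8.1 and 16.2 of \cite{Hi} are for.
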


\begin{proof}
We may assume that $\pi'$ is infinite.
Then $\pi$ has one end,
and $\beta_1^{(2)}(\pi)=0$, 
since $\pi$ has an infinite amenable normal subgroup.
Since $M(K)$ is a closed 4-manifold,
it is homotopy equivalent to a finite $PD_4$-complex, 
and so is strongly minimal, by Lemma 1.
The torsion subgroup of $A$ is characteristic, 
and so is normal in $\pi$.
Hence $A$ is torsion-free, by Theorem 3.
Therefore either $\pi\cong\Phi$ or $M(K)$ is aspherical
or $A\cong\mathbb{Z}$ and $\pi/A$ has infinitely many ends,
by Theorem 15.7 of \cite{Hi}.
If $M(K)$ is aspherical then $E$ has finite cohomological dimension, 
and so is virtually solvable \cite{HL}. 
(See Corollary 1.9.2 of \cite{Hi}.)
If $\pi/A$ has infinitely many ends then $E/A$ is finite, 
since it is an elementary amenable normal subgroup in $\pi/A$.
In all cases $E$ is in fact virtually torsion-free solvable.

Since $\pi$ has one end, $E$ has no non-trivial finite normal subgroup.
Hence if $h(E)=1$ then $\sqrt{E}$ is torsion-free, abelian 
and of index $\leq2$ in $E$.
If $E$ is not finitely generated, neither is $\sqrt{E}$. 
In this case $\pi\cong\Phi$ or $M(K)$ is aspherical
(by Theorem 15.7 of \cite{Hi}, as above), 
so $\pi$ is torsion-free, and $E$ must be abelian.

If $h(E)=2$ then $M(K)$ is aspherical, 
$E$ is torsion-free and $\pi/E$ is virtually a $PD_2$-group,
by Theorems 9.1 and 16.2 of \cite{Hi}.
Since $M(K)$ is orientable, 
$E$ cannot be the Klein bottle group,
and so $E\cong\mathbb{Z}^2$.
If $h(E)>2$ then $\pi$ is torsion-free polycyclic and $h(\pi)= 4$, 
by Theorem 8.1 of \cite{Hi},
so $h(E)=3$ or 4.
\end{proof}

\begin{cor}
The following are equivalent:
\begin{enumerate}
\item$\pi$ is elementary amenable and $h(\pi)<\infty$;
\item $\pi$ is virtually solvable;
\item{either} $\pi'$ is finite or $\pi\cong\Phi$
or $\pi$ is torsion-free polycyclic and $h(\pi)=4$.
\end{enumerate}
\end{cor}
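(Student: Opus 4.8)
The plan is to establish the two equivalences $(2)\Leftrightarrow(3)$ and $(1)\Leftrightarrow(2)$. The implication $(2)\Rightarrow(3)$ is the substantive one; everything else is bookkeeping around Theorem 3 and the structure theory of elementary amenable groups of finite Hirsch length.

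First I would dispose of the easy implications. The implication $(3)\Rightarrow(2)$ is immediate, since each alternative of $(3)$ names a virtually solvable group: if $\pi'$ is finite then $\pi$ is two-ended, hence virtually $\mathbb{Z}$; $\Phi$ is metabelian; and the last family is polycyclic. For $(2)\Rightarrow(1)$: a virtually solvable group is elementary amenable, and (granting $(2)\Rightarrow(3)$, proved below, which is not circular) the group then lies in one of the three families of $(3)$, of Hirsch length $1$, $2$ and $4$ respectively, so $h(\pi)<\infty$. For $(1)\Rightarrow(2)$: if $\pi$ is elementary amenable with $h(\pi)<\infty$ then by \cite{HL} it has a maximal locally-finite normal subgroup $\tau$ with $\pi/\tau$ virtually solvable; if $\pi'$ is finite then $\pi$ is two-ended and so virtually $\mathbb{Z}$, while if $\pi'$ is infinite then $\pi$ is an infinite amenable group, so $\beta_1^{(2)}(\pi)=0$ and $M(K)$ is strongly minimal by Lemma 1, and $\pi$ has one end (it is neither finite, nor virtually $\mathbb{Z}$, nor has a nonabelian free subgroup), whence Theorem 3 forces $\tau=1$ and $\pi=\pi/\tau$ is virtually solvable.

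The heart of the matter is $(2)\Rightarrow(3)$. Assuming $\pi$ is virtually solvable, I may assume $\pi'$ is infinite, as otherwise the first case of $(3)$ holds. Let $N$ be the normal core in $\pi$ of a finite-index solvable subgroup; then $N$ is normal of finite index in $\pi$, hence infinite and solvable, and the last nontrivial term $A$ of its derived series is a nontrivial abelian subgroup, characteristic in $N$ and therefore normal in $\pi$. As in the previous paragraph $\pi$ has one end and $M(K)$ is strongly minimal, so by Theorem 3 the (characteristic) torsion subgroup of $A$ is trivial, i.e.\ $A$ is torsion-free. I would then apply Theorem 4 with this $A$ and $E=\pi$: since $\pi'$ is infinite, $\pi$ is virtually torsion-free solvable, and the trichotomy on $h(\pi)$ applies. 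If $h(\pi)=1$ then $\pi$ is abelian --- forcing $\pi\cong\mathbb{Z}$ and $\pi'=1$, a contradiction --- or virtually $\mathbb{Z}$, forcing $\pi'$ finite, again a contradiction; and if $h(\pi)>2$ then $\pi$ is torsion-free polycyclic of Hirsch length $4$, the last case of $(3)$. In the remaining case $h(\pi)=2$ I would invoke Theorem 15.7 of \cite{Hi}: since $M(K)$ is orientable and strongly minimal, $\pi$ has one end, and $A$ is a torsion-free abelian normal subgroup, either $\pi\cong\Phi$, or $M(K)$ is aspherical, or $A\cong\mathbb{Z}$ with $\pi/A$ having infinitely many ends. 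The last is impossible, since $\pi/A$ is virtually solvable, hence finitely generated without a nonabelian free subgroup, hence has at most two ends; and if $M(K)$ were aspherical then $\pi$ would be a virtually solvable $PD_4$-group, so torsion-free and, by Theorem 8.1 of \cite{Hi}, polycyclic of Hirsch length $4$, contradicting $h(\pi)=2$. Hence $\pi\cong\Phi$, the middle case of $(3)$.

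I expect the case $h(\pi)=2$ to be the main obstacle: the structural part of Theorem 4 literally delivers $\pi\cong\mathbb{Z}^2$ there, which is impossible for a $2$-knot group, so the possibility $\pi\cong\Phi$ must be recovered by hand --- tracking the $\Phi$ branch of Theorem 15.7 of \cite{Hi} and ruling out the aspherical and infinitely-many-ended alternatives as above. (One could equally sidestep Theorem 4 altogether and run this last argument for every value of $h(\pi)$, at the cost of invoking Theorem 8.1 of \cite{Hi} in the aspherical case.) Everything else reduces quickly once Theorem 3 is available to kill locally-finite normal subgroups.
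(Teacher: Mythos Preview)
Your argument is correct and follows the same route as the paper: produce an abelian normal subgroup $A$ from the derived series of a finite-index solvable normal subgroup, use Lemma~1 and Theorem~3 to make it torsion-free, and then feed it into Theorem~4 (equivalently, into the trichotomy of Theorem~15.7 of \cite{Hi}). The paper compresses all of $(2)\Rightarrow(3)$ into the phrase ``Theorem~4 applies'', whereas you unpack the Hirsch-length cases explicitly; your caution in the $h(\pi)=2$ case is well placed, since Theorem~4 as \emph{stated} would give $\pi\cong\mathbb{Z}^2$ when $E=\pi$, and one really does have to reach back into its proof (the $\Phi$ branch of Theorem~15.7 and the elimination of the aspherical and infinitely-many-ends alternatives) to recover $\pi\cong\Phi$. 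One small point: in the $h(\pi)>2$ case Theorem~4's statement only gives $h(E)\in\{3,4\}$, so to conclude $h(\pi)=4$ you should cite Theorem~8.1 of \cite{Hi} directly (as Theorem~4's proof does), rather than the bare statement.
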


\begin{proof}
If $\pi$ is elementary amenable and $h(\pi)<\infty$ then either $\pi'$ 
is finite or $\pi$ has one end.
In the latter case $\pi$ has no nontrivial locally-finite normal subgroup, 
by Lemma 1 and Theorem 3,
and so $\pi$ is virtually solvable, by Corollary 1.9.2 of \cite{Hi}.

If $\pi$ has a solvable normal subgroup $S$ of finite index
then the lowest nontrivial term of the derived series for $S$ is abelian,
and is characteristic in $S$.
Hence it is normal in $\pi$, and Theorem 4 applies.

Finally, (3) clearly implies the other conditions.
\end{proof}

It is enough to assume that $\pi$ is elementary amenable 
and has a nontrivial abelian normal subgroup.
Can we relax ``virtually solvable" further to ``elementary amenable",
or even just ``amenable"?

If $\pi\cong\Phi$ then $K$ is TOP isotopic to Fox's Example 10 or its reflection Example 11 \cite{Hi09}, 
while if $\pi$ is  torsion-free polycyclic then it is one of the groups described in \S4 of Chapter 16 of \cite{Hi},
and $\pi$ determines $M(K)$ up to homeomorphism,
by Theorem 17.4 of \cite{Hi}.

\begin{cor}
The Hirsch-Plotkin radical $\sqrt\pi$ is nilpotent.
\end{cor}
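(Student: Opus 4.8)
The plan is to combine the structural dichotomy already established for $\sqrt\pi$ with the fact, proven above, that a one-ended 2-knot group has no nontrivial locally-finite normal subgroup. First I would dispose of the case when $\pi$ has two or infinitely many ends. If $\pi'$ is finite then $\pi$ is virtually $\mathbb{Z}$, and $\sqrt\pi$ is then a finitely generated virtually abelian, hence nilpotent (indeed abelian-by-finite with $h\le 1$, so abelian), group; in fact one checks directly from the known list of two-ended 2-knot groups that $\sqrt\pi$ is $\mathbb{Z}$ or $\mathbb{Z}\oplus Z/2Z$, both nilpotent. So the substantive case is when $\pi$ has one end, which I assume from now on.

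Next I would observe that $\sqrt\pi$ is elementary amenable (as noted in Section 1), so if $h(\sqrt\pi)<\infty$ then $\sqrt\pi$ is virtually solvable, and Theorem 4 (applied with $A$ the lowest nontrivial term of the derived series of $\sqrt\pi$, which is abelian and characteristic, hence normal in $\pi$, provided $\sqrt\pi\ne 1$; if $\sqrt\pi=1$ there is nothing to prove) tells us that $\sqrt\pi$ is abelian or virtually $\mathbb{Z}$ when $h=1$, is $\mathbb{Z}^2$ when $h=2$, and is torsion-free polycyclic with $h=3$ or $4$ when $h>2$. A torsion-free polycyclic group is of course not automatically nilpotent, so here I would use the additional input that $\sqrt\pi$ is \emph{locally nilpotent} by definition: a locally nilpotent polycyclic group is nilpotent (it is finitely generated, hence nilpotent of bounded class). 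Likewise a locally nilpotent group that is abelian-by-finite of Hirsch length $1$ and has no nontrivial finite normal subgroup (by Theorem 3) must in fact be torsion-free abelian, hence nilpotent. So in every finite-Hirsch-length case $\sqrt\pi$ is nilpotent.

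It remains to rule out $h(\sqrt\pi)=\infty$. Here I would argue that since $\sqrt\pi$ is elementary amenable, $h(\sqrt\pi)=\infty$ forces $\sqrt\pi$ to contain, for each $n$, a subnormal subgroup witnessing arbitrarily large Hirsch length; but more usefully, $\sqrt\pi$ then has an abelian normal subgroup $A$ which is either of infinite rank or, together with infinitely many nontrivial abelian quotients, gives an elementary amenable normal subgroup $E\le\pi$ with $h(E)=\infty$. Applying Theorem 4 with such a pair $A\le E$ yields a contradiction, since Theorem 4 forces $h(E)\le 4$. Thus $h(\sqrt\pi)\le 4<\infty$, and we are back in the previous case. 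I expect the extraction of a suitable pair $A\le E$ inside $\sqrt\pi$ with $h(E)$ exceeding $4$ — i.e. showing that infinite Hirsch length in the locally nilpotent elementary amenable group $\sqrt\pi$ actually produces a \emph{finitely generated} (or at least Hirsch-length $>4$) elementary amenable normal subgroup of $\pi$ to which Theorem 4 applies — to be the one genuinely delicate point; the rest is routine bookkeeping with the structure theory of elementary amenable groups of finite Hirsch length and the observation that locally nilpotent polycyclic implies nilpotent.
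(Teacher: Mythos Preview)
Your finite-Hirsch-length analysis is essentially sound, if roundabout. The gap you flag at $h(\sqrt\pi)=\infty$, however, is genuine: to invoke Theorem~4 with $E=\sqrt\pi$ you need a nontrivial abelian $A\leq\sqrt\pi$ which is normal in $\pi$, and for an arbitrary locally nilpotent group there is no obvious such $A$. Torsion-free locally nilpotent groups with trivial centre exist (McLain-type constructions over $\mathbb{Q}$), so $\zeta\sqrt\pi$ may fail, and without solvability the derived series need not terminate. Your proposed extraction of ``a suitable pair $A\leq E$'' is precisely the missing step, and it is not clear how to carry it out.

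The paper avoids this entirely by exploiting a structural feature of locally nilpotent groups rather than trying to manufacture an abelian normal subgroup. In any locally nilpotent group the torsion elements form a characteristic locally-finite subgroup $T$ with torsion-free quotient. Since $T$ is characteristic in $\sqrt\pi$ it is normal in $\pi$; once one reduces to the case where $\sqrt\pi$ is not finitely generated (otherwise there is nothing to prove, which also absorbs your end-number case split), $\pi$ has one end and Theorem~3 kills $T$, so $\sqrt\pi$ is torsion-free. The paper then appeals directly to Theorem~8.1 of \cite{Hi}, \emph{not} to Theorem~4: for $h(\sqrt\pi)\leq2$ a torsion-free locally nilpotent group is abelian, while for $h(\sqrt\pi)>2$ Theorem~8.1 forces $\pi$ to be virtually polycyclic with $h(\pi)=4$. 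Torsion-freeness of $\sqrt\pi$ is exactly what makes Theorem~8.1 go through even when $h$ is a priori infinite (cf.\ the remark after Corollary~3.1 and the citation of \cite{Kr}); this is why the paper does not need the abelian hypothesis that blocks your approach.
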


\begin{proof}
Since $\sqrt\pi$ is locally nilpotent it has a maximal locally-finite normal subgroup 
$T$ with torsion-free quotient.
If $\sqrt\pi$ is finitely generated there is nothing to prove.
Otherwise, $\pi$ has one end and so $T$ is trivial, by Theorem 3.
If  $T=1$ and $h(\sqrt\pi)\leq2$ then $\sqrt\pi$ is abelian;
if $h(\sqrt\pi)>2$ then $\pi$ is virtually polycyclic and $h(\pi)=4$,
by Theorem 8.1 of \cite{Hi}.
In each case $\sqrt\pi$ is nilpotent.
\end{proof}

Every finitely generated abelian group 
is the centre of some high-dimensional knot group \cite{HK78}.
On the other hand, the only 1-knots whose groups have 
nontrivial abelian normal subgroups are the torus knots,
for which $\sqrt\pi=\zeta\pi\cong\mathbb{Z}$ and $\zeta\pi\cap\pi'=1$.
The intermediate case of 2-knots is less clear.
If $\zeta\pi$ has rank $>1$ then it is $\mathbb{Z}^2$; 
most twist spins of torus knots have such groups.
There are examples with centre 1, $Z/2Z$, $\mathbb{Z}$ or $\mathbb{Z}\oplus{Z/2Z}$.
(See Chapters 15--17 of \cite{Hi}.)

\begin{cor}
Let $K$ be a $2$-knot with group $\pi=\pi{K}$.
Then
\begin{enumerate}
\item{if} $\pi$ has two ends then $\zeta\pi\cong\mathbb{Z}\oplus{Z/2Z}$ 
if $\pi'$ has even order;
otherwise $\zeta\pi\cong\mathbb{Z}$;
\item{if} $\pi$ has one end then $\zeta\pi\cong\mathbb{Z}^2$,
or is torsion-free of rank $\leq1$;
\item{if} $\pi$ has infinitely many ends then $\zeta\pi$ is finite.
\end{enumerate}
\end{cor}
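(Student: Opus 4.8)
The plan is to treat the three cases by the number of ends of $\pi$. Throughout write $M=M(K)$, a finite $PD_4$-complex with $\pi_1(M)=\pi$ and $\chi(M)=0$, and let $\tau\leq\zeta\pi$ be the torsion subgroup of $\zeta\pi$; it is characteristic in $\zeta\pi$ and so normal in $\pi$. If $\pi$ has infinitely many ends then it splits nontrivially over a finite subgroup, hence acts on a tree with finite edge stabilizers and finite kernel. A central element acting hyperbolically would admit a $\pi$-invariant axis, forcing $\pi$ to have two ends; so every element of $\zeta\pi$ is elliptic and, by minimality of the action, acts trivially. Thus $\zeta\pi$ lies in the finite kernel of the action, and is finite. (Alternatively one may just quote the standard fact that a group with infinitely many ends has finite centre.)

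If $\pi$ has one end then $\pi'$ is infinite, $\beta_1^{(2)}(\pi)=0$ (as for every $2$-knot group), and $M$ is strongly minimal by Lemma 1; so by Theorem 3, $\pi$ has no nontrivial locally-finite normal subgroup. Hence $\tau=1$ and $\zeta\pi$ is torsion-free. If $\zeta\pi=1$ we are done. Otherwise $\zeta\pi$ is a nontrivial torsion-free abelian normal subgroup, so Theorem 4 applies with $A=E=\zeta\pi$, giving $h(\zeta\pi)\in\{1,2,3,4\}$ with the structure stated there. When $h(\zeta\pi)=1$, $\zeta\pi$ is torsion-free of rank $1$; when $h(\zeta\pi)=2$, $\zeta\pi\cong\mathbb{Z}^2$. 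When $h(\zeta\pi)>2$, $\pi$ is torsion-free polycyclic with $h(\pi)=4$, and I would exclude $h(\zeta\pi)\geq3$ as follows: $h(\zeta\pi)=4$ would make $\pi$ centre-by-finite, hence $\pi'$ finite by Schur's theorem, contradicting one end; and $h(\zeta\pi)=3$ would force a finite-index subgroup of $\pi$ to be a central extension of $\mathbb{Z}$ by $\zeta\pi$, hence abelian, so $\pi$ would be virtually abelian of Hirsch length $4$ — excluded by the classification of the polycyclic $2$-knot groups of Hirsch length $4$ in \S4 of Chapter 16 of \cite{Hi}, none of which is virtually abelian. So $h(\zeta\pi)\leq2$, which settles this case.

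If $\pi$ has two ends, let $F$ be its maximal finite normal subgroup. Then $\pi/F\cong\mathbb{Z}$ — not the infinite dihedral group, since $\pi^{ab}=\mathbb{Z}$ does not surject onto $(\mathbb{Z}/2\mathbb{Z})^2$ — and $F=\pi'$, because $F$ has a torsion-free cyclic quotient (so $\pi'\leq F$) and trivial image in $\pi^{ab}$ (so $F\leq\pi'$). Hence $\pi\cong\pi'\rtimes_\theta\mathbb{Z}$ with $\pi'$ finite and $\theta$ conjugation by a meridian $t$; since $\mathrm{Aut}(\pi')$ is finite, $t^d$ is central for $d=\mathrm{ord}(\theta)$, so $\zeta\pi$ has rank exactly $1$, and as $\pi$ is Noetherian, $\zeta\pi\cong\mathbb{Z}\oplus\tau$ with $\tau=\zeta\pi\cap\pi'=(\zeta\pi')^\theta$. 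The Wang sequence of $\pi'\to\pi\to\mathbb{Z}$ together with $H_1(\pi)=\mathbb{Z}$ shows $\theta_*-1\in\mathrm{Aut}((\pi')^{ab})$. Since $\pi'$ is finite, the infinite cyclic cover of $M$ is a finite $PD_3$-complex with fundamental group $\pi'$ (\cite{Hi}), so $\pi'$ acts freely on a homotopy $3$-sphere and has periodic cohomology of period dividing $4$. In particular every Sylow subgroup of $\zeta\pi'$ is cyclic, so $\zeta\pi'$ is cyclic; $\pi'$ has at most one involution; and that involution, when present, is central and $\theta$-fixed.

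The heart of the two-ended case is then a transfer computation. For each prime $p$, transfer to a Sylow $p$-subgroup $P$ factors through $(\pi')^{ab}$ and carries a central $p$-element $z$ to $z^{[\pi':P]}$, hence is injective on the $p$-part of $\zeta\pi'$ whenever $P$ is abelian — so for all odd $p$, and for $p=2$ if the Sylow $2$-subgroup is cyclic; being $\theta$-equivariant, it then embeds the relevant part of $(\zeta\pi')^\theta$ into $\ker(\theta_*-1)=0$. Thus the odd part of $\tau$ is trivial: if $|\pi'|$ is odd, $\tau=1$ and $\zeta\pi\cong\mathbb{Z}$. If $|\pi'|$ is even, the unique involution $\iota$ lies in $\tau$, so the Sylow $2$-subgroup cannot be cyclic (else transfer would embed $\langle\iota\rangle$ into $\ker(\theta_*-1)=0$), hence is generalized quaternion with centre $\langle\iota\rangle\cong\mathbb{Z}/2\mathbb{Z}$; then the $2$-part of $\tau$ is exactly $\langle\iota\rangle$, so $\tau\cong\mathbb{Z}/2\mathbb{Z}$ and $\zeta\pi\cong\mathbb{Z}\oplus\mathbb{Z}/2\mathbb{Z}$, since $\pi'$ has even order precisely when it has an involution. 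The steps I expect to require the most care are the two inputs from \cite{Hi} in the one-ended case (strong minimality, via $\beta_1^{(2)}(\pi)=0$, and the bound $h(\zeta\pi)\leq2$) and, in the two-ended case, the periodicity of $\pi'$, which rests on the infinite cyclic cover being a $PD_3$-complex; granted these, the centre computation for two-ended groups is the short transfer argument above.
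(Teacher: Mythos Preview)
Your treatment of parts (2) and (3) follows the paper's line: (3) is the standard fact about groups with infinitely many ends, and (2) rests on Theorem~4, just as the paper says. You are more explicit than the paper in ruling out $h(\zeta\pi)\geq3$; the paper's bare ``follows from Theorem~4'' really also leans on the classification of the polycyclic case (equivalently Theorem~16.3 of \cite{Hi}), which you invoke directly. One caution: your parenthetical ``$\beta_1^{(2)}(\pi)=0$ (as for every $2$-knot group)'' overclaims --- this is not known for arbitrary $2$-knot groups. What you actually need is that $\beta_1^{(2)}(\pi)=0$ whenever $\zeta\pi$ is infinite, and that holds because $\zeta\pi$ is then an infinite amenable normal subgroup. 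The residual case $\pi$ one-ended with $\zeta\pi$ finite and nontrivial is delicate for exactly this reason; note that the paper's own appeal to Theorem~4 (which requires an \emph{infinite} $E$) is equally silent here.

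Where you genuinely diverge is part (1). The paper simply cites the classification of $2$-knot groups with finite commutator subgroup (Chapter~15 of \cite{Hi}) to read off $\zeta\pi$. You instead give a self-contained argument: identify $\tau=(\zeta\pi')^\theta$, use the Wang sequence to get $\theta_*-1\in\mathrm{Aut}((\pi')^{ab})$, use that the infinite cyclic cover is a $PD_3$-complex to get periodic cohomology of period dividing $4$ for $\pi'$, and then run a transfer argument to kill the odd part of $\tau$ and force the Sylow $2$-subgroup to be generalized quaternion when $|\pi'|$ is even. This is correct and pleasantly elementary; it recovers, as a by-product, the nontrivial structural fact that the Sylow $2$-subgroup of $\pi'$ cannot be cyclic when $|\pi'|$ is even. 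The trade-off is that the paper's route, while a black box, also yields the full list of possible $\pi'$ (cyclic, $Q(8)\times\mathbb{Z}/n$, etc.), whereas your argument extracts only what is needed for $\zeta\pi$. One phrasing point: the $\theta$-equivariance you use is that of the natural map $\zeta\pi'\to(\pi')^{ab}$, with transfer showing this map is injective on the relevant $p$-part; the transfer $V\colon(\pi')^{ab}\to P^{ab}$ itself is not literally $\theta$-equivariant since $\theta$ need not fix $P$.
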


\begin{proof}
If $\pi$ has two ends then $\pi'$ is finite, and so $\zeta\pi$ is finitely generated and of rank 1.
It follows from the classification of such 2-knot groups 
(see Chapter 15 of \cite{Hi}) that 
$\zeta\pi\cong\mathbb{Z}\oplus{Z/2Z}$ if $\pi'$ has even order, and
otherwise $\zeta\pi\cong\mathbb{Z}$.

Part (2) follows from Theorem 4, while part (3) is clear.
\end{proof}

Note that $\pi$ has finitely many ends if $\pi'$ is finitely generated, 
or if $\zeta\pi$ is infinite.
When $\pi$ has more than one end Lemma 2.10 of \cite{Hi} 
either does not lead to a contradiction or does not apply.

If $\zeta\pi$ is a nontrivial torsion group then it is finite.
Yoshikawa constructed an example of a 2-knot whose group $\pi$ has centre of order 2 \cite{Yo}.
It is easy to see that $\sqrt\pi=\zeta\pi$ in this case.
The construction may be extended as follows. 
Let $q>0$ be odd and let $k_q$ be a 2-bridge knot such that the 2-fold branched cyclic cover of $S^3$,
branched over $k_q$ is a lens space $L(3q,r)$, for some $r$ relatively prime to $q$.
Let $K_1=\tau_2k_q$ be the 2-twist spin of $k_q$, and let $K_2=\tau_3k$ 
be the 3-twist spin of a nontrivial knot $k$. 
Let $\gamma$ be a simple closed curve 
in $X(K_1)$ with image $[\gamma]\in\pi{K_1}$ of order $3q$, 
and let $w$ be a meridian for $K_2$.
Then $w^3$ is central in $\pi{K_2}$.
The group of the satellite of $K_1$ about $K_2$ relative to $\gamma$ is the generalized free
product
\[
\pi=\pi{K_2}/\langle\langle{w^{3q}}\rangle\rangle*_{w=[\gamma]}\pi{K_1}.
\]
(See \cite{Ka}.)
Hence $\sqrt\pi=\langle{w^3}\rangle\cong{Z/qZ}$, while $\zeta\pi=1$.

If we use a 2-knot $K_1$ with group $(Q(8)\times{Z/3qZ})\rtimes_\theta\mathbb{Z}$
instead and choose $\gamma$ so that $[\gamma]$ has order $6q$ then we obtain
examples with $\sqrt\pi\cong{Z/2qZ}$ and $\zeta\pi=Z/2Z$.
(Knots $K_1$ with such groups may be constructed by surgery on sections of mapping 
tori of homeomorphisms of 3-manifolds with fundamental group $Q(8)\times{Z/3qZ}$ \cite{Yo80}.)

If $\zeta\pi$ has rank 1 and nontrivial torsion then $\pi'$ is finite,
and $\zeta\pi$ is finitely generated.

If $\zeta\pi$ has rank 1 but is not finitely generated then $M(K)$ is aspherical.
It is not known whether there are such 2-knots 
(nor, more generally,
whether abelian normal subgroups of rank 1 in $PD_n$-groups 
with $n>3$ must be finitely generated).
What little we know about this case is as follows.
Since $\zeta\pi<\pi'$ and $\pi/\pi'\cong\mathbb{Z}$, we must have $\zeta\pi\leq\pi''$.
Since $\zeta\pi$ is torsion-free of rank 1 but is not finitely generated, $c.d.\zeta\pi=2$.
Hence if $G$ is a nonabelian subgroup which contains $\zeta\pi$ then $c.d.G\geq3$,
by Theorem 8.6 of \cite{B}.
If $H$ is a subgroup of $\pi$ such that $H\cap\zeta\pi=1$ then $H.\zeta\pi\cong{H}\times\zeta\pi$
is not finitely generated, and so has infinite index in $\pi$.
Hence $c.d.H\times\mathbb{Z}\leq{c.d.}{H}\times\zeta\pi\leq3$ \cite{Str}.
Theorem 5.5 of \cite{B} gives, firstly, that $c.d.H\leq2$, 
and then, that if $H$ is $FP_2$ then $c.d.H\leq1$, and so $H$ is free.
Thus if $\pi$ is almost coherent every subgroup either meets $\zeta\pi$ nontrivially
or is locally free.

If $\zeta\pi$ has rank $>1$ then $M(K)$ is aspherical and
$\zeta\pi\cong\mathbb{Z}^2$, by Theorem 16.3 of \cite{Hi}.

The following questions remain open:
 \begin{enumerate}[(i)]
 \item{if} $\zeta\pi$ has rank 1, must it be finitely generated?
 
 \item{if} $\zeta\pi$ is finite, must it be $Z/2Z$ or 1?
 
 \item{is} there a 2-knot group $\pi$ with $\sqrt\pi$ a non-cyclic finite group?
 
\item{if} $\pi$ is elementary amenable is it virtually solvable?
\end{enumerate}
In each case the answer is ``yes" if $\pi'$ is finitely presentable,
for then the infinite cyclic cover $M(K)'$ is homotopy equivalent 
to a $PD_3$-complex,
by Theorem 4.5 of \cite{Hi}.
  
\section{geometric dimension 2}

The only solvable 2-knot groups with deficiency 1 are $\mathbb{Z}$ and $\Phi$.
One of the early triumphs of 4-dimensional surgery was Freedman's 
unknotting criterion:
2-knots with group $\mathbb{Z}$ are topologically trivial.
In \cite{Hi09} we showed that any 2-knot $K$ with group $\Phi$ 
is topologically equivalent to Fox's Example 10 or its reflection (Example 11).
The argument involved showing that 
\begin{enumerate}[(i)]
\item $\pi=\pi{K}$ determines the homotopy type of  $M(K)$;
\item $Wh(\pi)=0$ and the surgery assembly homomorphisms $\mathcal{A}_*(\pi):H_*(\pi;\mathbb{L}_0)\to{L_*(\pi)}$ are isomorphisms;
\item4-dimensional TOP surgery;
\item $\pi$ has an unique  weight orbit, up to inversion; and 
\item{Fox's} examples are reflexive.
\end{enumerate}
The group $\Phi$ has geometric dimension 2 ($g.d.\Phi=2$): there is a finite 2-dimensional $K(\Phi,1)$ complex.
Our strategy for (i) uses the fact that $M(K)$ is strongly minimal
if $g.d.\pi=2$, since $\chi(M(K))=0=2\chi(\pi)$.
It should extend to all knot groups $\pi$ with $g.d.\pi=2$, 
but at present is incomplete; 
there is a 2-torsion condition which holds for $\Phi$,
but is not otherwise easily verified. 
(See \cite{Hi13} for a much expanded exposition of \cite{Hi09} 
and earlier work.)
When $g.d.\pi=2$ and $\pi'$ is finitely generated 
there is a quite different argument.
For then $\pi'$ is free of finite rank, 
and $M(K)$ is homotopy equivalent to the mapping torus 
of a self-homeomorphism of $\#^r(S^2\times{S^1})$,
where $r$ is the rank of $\pi'$.
(See Corollary 4.5.3 of \cite{Hi}.)

Step (ii) holds if $\pi$ is a 1-knot group, 
or is square root closed accessible,
or is in the class $\mathcal{X}$ of fundamental groups of finite graphs 
of groups with all vertex and edge groups $\mathbb{Z}$.
(See Lemma 6.9 of \cite{Hi}.)
Semidirect products $F(r)\rtimes\mathbb{Z}$ are square root closed accessible, 
while $\Phi$ is in $\mathcal{X}$.

At the time of writing,
the largest known class of groups for which 4-dimensional topological surgery works 
is the class $SG$ obtained from subexponential groups 
by taking increasing unions and extensions.
To go beyond this class we must work modulo TOP $s$-cobordisms 
(or $s$-concordances of 2-knots) rather than expect homeomorphisms.

Step (iv) fails for other knot groups $\pi$ with $g.d.\pi=2$.
In general, we must specify a weight orbit in order to
recover $X(K)$ from $M(K)$.

The final step (v) is not satisfied by all 2-knots.
However, those which have a Seifert hypersurface with free fundamental group 
are reflexive.
Fibred 2-knots with $\pi'$ free and ribbon 2-knots have 
such Seifert hypersurfaces \cite{Ya}.

The difficulty with (i) has been bypassed in \cite{HKT}, 
which shows directly that an orientable 4-manifold $M$ with $\pi=\pi_1(M)$ 
such that $g.d.\pi=2$ and $\pi$ ``satisfies properties $W$-$AA$" 
(i.e., (ii) holds) is determined up to $s$-cobordism by $\pi$, 
the second Wu class $w_2(M)$ and 
the equivariant intersection pairing on $\pi_2(M)$.

\begin{theorem}
Let $\pi$ be a $2$-knot group with deficiency $1$.
If either
\begin{enumerate}
\item$\pi$ is the group of a $1$-knot; or
\item$\pi'$ is finitely generated; or
\item$\beta_1^{(2)}(\pi)=0$ and $\pi$ is square-root closed accessible,
\end{enumerate}
then any $2$-knot with group $\pi$ is determined up to TOP $s$-concordance, 
reflections and the Gluck ambiguity by $\pi$ 
and the weight orbit of a meridian.
\end{theorem}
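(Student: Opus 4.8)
The plan is to pass from $2$-knots to the closed $4$-manifolds obtained by surgery, classify those up to $s$-cobordism in terms of $\pi$, and then return to knots, carrying the meridian along at every stage. Recall that the exterior $X(K)$ is recovered from $M(K)$ by remembering the isotopy class of the core $\{0\}\times S^1$ of the surgery solid torus, which by general position is determined by the conjugacy class of a meridian; and that $K$ is recovered from $X(K)$ up to the Gluck ambiguity. So, given $2$-knots $K_1,K_2$ with group $\pi$ and meridians $\mu_1,\mu_2$ in one $Aut(\pi)$-orbit, fix $\alpha\in Aut(\pi)$ carrying the class of $\mu_1$ to that of $\mu_2$; it then suffices to construct a TOP $s$-cobordism $W$ from $M(K_1)$ to $M(K_2)$ which induces $\alpha$ on $\pi_1$ and restricts to a product $D^3\times S^1\times[0,1]$ joining the two surgery solid tori, with cores matched by $\alpha$ (up to inverting the meridian). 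Indeed, deleting this product from $W$ leaves an $s$-cobordism $W_0$ between the exteriors, agreeing with the product structures on the $S^2\times S^1$ boundaries; re-attaching $S^2\times D^2\times[0,1]$ to $W_0$ then produces a locally flat concordance $S^2\times[0,1]\hookrightarrow V$ with exterior $W_0$, where $V$ is an $h$-cobordism from $S^4$ to $S^4$; and since $\pi_1(V)=1$, Freedman's $s$-cobordism theorem gives $V\cong S^4\times[0,1]$, so $K_1$ and $K_2$ are TOP $s$-concordant. The choices of orientation on $S^4$ and on $S^2$ account for the ``reflections'', and the order-two indeterminacy in re-attaching $S^2\times D^2$ is the Gluck ambiguity.

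The heart of the matter is thus the classification of the knot manifolds. In cases (1) and (3) I may assume $\pi'$ is infinite, so that $\pi$ has one end (if $\pi'$ were finite then $\pi\cong\mathbb{Z}$, by a deficiency count, and $K$ would be trivial by Freedman's unknotting criterion), whence $\beta_1^{(2)}(\pi)=0$ --- this is hypothesised in (3), and holds for $1$-knot groups --- so by Lemma 1 each $M(K_i)$ is strongly minimal and the equivariant intersection pairing on $\pi_2(M(K_i))$ vanishes. Moreover $\pi$ satisfies ``properties $W$-$AA$'' (that is, $Wh(\pi)=0$ and the surgery assembly maps are isomorphisms), by Lemma 6.9 of \cite{Hi}, since $1$-knot groups and square-root closed accessible groups are within its scope. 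Provided also $g.d.\pi=2$ --- which holds for $1$-knot groups, and in case (3) must be deduced from the hypotheses, or else circumvented by working with strong minimality in its place --- the main result of \cite{HKT} shows that $M(K_i)$ is determined up to $s$-cobordism by $\pi$, the second Wu class $w_2(M(K_i))$, and that (trivial) pairing; as $M(K_i)$ is spin, $w_2(M(K_i))=0$, so $M(K_1)$ and $M(K_2)$ are $s$-cobordant, and it remains to refine this $s$-cobordism so that it induces $\alpha$ and carries one surgery solid torus onto the other as a product. In case (2) I argue instead through the infinite cyclic cover: $\pi'$ is free of finite rank $r$ and $M(K_i)$ is homotopy equivalent to the mapping torus of a self-homeomorphism $\phi_i$ of $N=\#^r(S^2\times S^1)$, by Corollary 4.5.3 of \cite{Hi}. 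The class of $\phi_i$ in $\pi_0(\mathrm{Homeo}(N))$ is pinned down by its action on $\pi_1(N)=\pi'$ --- itself determined by $\pi$ and the section picked out by $\mu_i$ --- modulo the kernel of $\pi_0(\mathrm{Homeo}(N))\to Aut(\pi')$, which is generated by Gluck twists on the embedded $2$-spheres and by reflections; so $M(K_1)$ and $M(K_2)$ are homotopy equivalent up to the Gluck ambiguity and reflections, hence $s$-cobordant, $\pi\cong\pi'\rtimes\mathbb{Z}$ being square-root closed accessible and so satisfying ``$W$-$AA$''.

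Combining the refined $s$-cobordism of the previous step with the descent of the first gives that $K_1$ is TOP $s$-concordant to $K_2$ up to reflections and the Gluck ambiguity. I expect the principal obstacle to be the bookkeeping of the peripheral structure: one must upgrade the \emph{absolute} $s$-cobordism classifications used above --- that of \cite{HKT}, and that of the relevant mapping tori --- to classifications \emph{relative} to the weight orbit, so that the $s$-cobordism genuinely carries the surgery solid torus of $M(K_1)$ onto that of $M(K_2)$ compatibly with $\alpha$, leaving only meridian-inversion, reflection and the Gluck move as indeterminacies. Subsidiary points to settle are the status of the condition $g.d.\pi=2$ in case (3) and the (standard) vanishing of $w_2(M(K))$.
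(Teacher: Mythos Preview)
Your overall architecture---pass to $M(K)$, classify up to $s$-cobordism via \cite{HKT}, then descend to knots via the meridian---matches the paper's. But there are two places where you flag uncertainty that the paper resolves cleanly, and your separate treatment of case~(2) is an unnecessary detour.

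First, the condition $g.d.\pi=2$ in case~(3) is not something to be ``circumvented'': it follows directly from $\mathrm{def}(\pi)=1$ and $\beta_1^{(2)}(\pi)=0$, by Theorem~2.5 of \cite{Hi}. This is the key structural input you are missing. Once you have it, case~(2) reduces to case~(3): if $\pi'$ is finitely generated then $\beta_1^{(2)}(\pi)=0$, hence $g.d.\pi=2$, hence $\pi'$ is free and $\pi\cong\pi'\rtimes\mathbb{Z}$ is square-root closed accessible. So all three cases are handled uniformly by \cite{HKT}, and your mapping-torus argument via $\pi_0(\mathrm{Homeo}(\#^r S^2\times S^1))$ is not needed. (That argument is also shakier than you suggest: matching the Laudenbach kernel, generated by sphere-twists on the $3$-manifold side, with the Gluck ambiguity on the knot side is not automatic, and in any event you still invoke $W$--$AA$ and surgery at the end, so nothing is saved.) For case~(1) the paper cites \cite{AFR} rather than Lemma~6.9 of \cite{Hi} for $W$--$AA$, and \cite{Lu} for $\beta_1^{(2)}=0$; and it gets one end uniformly from torsion-freeness via \cite{Kl}, rather than from $\pi'$ being infinite.

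Second, the ``relative'' bookkeeping you identify as the principal obstacle is lighter than you fear. One does not need a relative version of the classification in \cite{HKT}. The paper takes the absolute $s$-cobordism $W$ produced by \cite{HKT} (arranged so that the inclusions of the two ends realise the given isomorphism $\theta$ on $\pi_1$), observes that the common weight orbit of the meridians determines a properly embedded annulus in $W$ whose boundary components are meridians for $K_0$ and $K_1$, and then simply excises an open product neighbourhood of this annulus. What remains is a relative $s$-cobordism between the exteriors, giving the $s$-concordance directly, modulo reflections and the Gluck ambiguity. There is no need to upgrade \cite{HKT} to a statement relative to a peripheral structure.
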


\begin{proof}
We may assume that $\pi\not\cong\mathbb{Z}$.

If $\pi$ is a 1-knot group then $g.d.\pi=2$ and $\beta_1^{(2)}(\pi)=0$,
by Theorem 4.1 of \cite{Lu},
while  $Wh(\pi)=0$ and the surgery assembly homomorphisms  $\mathcal{A}_*(\pi)$
are isomorphisms \cite{AFR}.
If $\pi'$ is finitely generated then $\beta_1^{(2)}(\pi)=0$,
while if $\mathrm{def}(\pi)=1$ and $\beta_1^{(2)}(\pi)=0$ then
$g.d.\pi=2$, by Theorem 2.5 of \cite{Hi}.
In particular, if $\pi'$ is finitely generated and  $\mathrm{def}(\pi)=1$ 
then $\pi'$  is free and so $\pi\cong\pi'\rtimes\mathbb{Z}$ is square root closed accessible.
Hence $(2)\Rightarrow(3)$, which implies that $Wh(\pi)=0$ and the  $\mathcal{A}_*(\pi)$
are isomorphisms \cite{Ca}.
Thus in each case  $\pi$ satisfies properties $W$-$AA$ of \cite{HKT}.

In each case $\pi$ is torsion-free, 
and so has one end \cite{Kl}, and $\beta_1^{(2)}(\pi)=0$. 
Hence $M(K)$ is strongly minimal, by Lemma 1.
Since $M(K)$ is orientable, 
$w_2(M(K))=0$, the equivariant intersection pairing on $\pi_2(M(K))$
is trivial and $\pi$ satisfies properties $W$-$AA$, 
$\pi$ determines $M(K)$ up to $s$-cobordism,
by the main result of \cite{HKT}.
More precisely, if $K_0$ and $K_1$ each have group $\pi$ and 
their weight orbits correspond under an isomorphism $\theta:\pi{K_0}\cong\pi{K_1}$ 
then there is a (5-dimensional) $s$-cobordism $W$ from $M(K_0)$ to $M(K_1)$
such that $\pi_1(j_0)=\pi_1(j_1)\theta$, where $j_i:M(K_i)\to{W}$ 
is the natural inclusion, for $i=0,1$.
The weight orbit of a meridian determines a properly embedded annulus
in $W$ with boundary components meridians for $K_0$ and $K_1$.
Excising an open product neighbourhood of this annulus gives 
a relative $s$-cobordism between the exteriors, 
and hence an $s$-concordance between the knots, 
modulo reflections and the Gluck ambiguity.
\end{proof}

Beyond this, there is some reason to hope that 
all 2-knots with groups of deficiency 1 should be so determined.
The exact state of affairs is contingent upon the outcome 
of several well-known conjectures.
Let $\mathcal{DW}$ be the class of groups with deficiency 1 and weight 1.
Kervaire showed that every group in $\mathcal{DW}$  is the group 
of a smooth knot in a smooth homotopy 4-sphere \cite{Ke}.
He first constructs a knot manifold as the boundary of a 5-manifold obtained from $D^5$
by attaching 1- and 2-handles, and so the groups are realized by homotopy ribbon knots.

If the {\it Andrews-Curtis Conjecture\/} is true then groups 
in $\mathcal{DW}$ have Wirtinger presentations of deficiency 1, 
and so are groups of ribbon 2-knots \cite{Yo82'}.
(In particular, these are smooth knots in $S^4$.) 
If the {\it Whitehead Conjecture\/} is true then $g.d.\pi\leq2$, 
for all $\pi\in\mathcal{DW}$.
(If $\pi\in\mathcal{DW}$ then 
$g.d.G=2\Leftrightarrow{c.d.G=2}$, by Theorem 2.8 of \cite{Hi}.)
If the {\it Fibred Isomorphism Conjecture\/} holds for groups 
of cohomological dimension 2 then (ii) holds for such groups.

A conjecture which has perhaps been less studied has implications for 
characterizing 1-knot groups, 
and thus for recognizing the Artin spins of 1-knots.
If $\pi$ is a nontrivial 1-knot group and $T\cong\mathbb{Z}^2$ 
is the subgroup generated by a meridian-longitude pair 
$(\mu, \lambda)$ then $(\pi,T)$ is a $PD_3$-pair.
(When the knot is prime the subgroup $T$ is unique up to conjugacy.)
If {\it finitely presentable $PD_3$-groups are fundamental groups 
of aspherical closed 3-manifolds\/} then the converse holds:
every $PD_3$-pair $(\pi,T)$
in which $T\cong\mathbb{Z}^2$ contains a normal generator 
is the group of a 1-knot.

This is known in the fibred case.
The following result is essentially Theorem 5 of \cite {Th},
which is in turn a reformulation of Theorem 9.2.3 of \cite{Ne},
taking into account first
the determination of $PD_2$-groups and pairs, 
and more recently
Perelman's proof of the Poincar\'e Conjecture. 

\begin{theorem} 
Let $\pi$ be a group with deficiency $1$ and weight $1$, 
and suppose that $\pi\not\cong\mathbb{Z}$.
Then the following are equivalent:
\begin{enumerate}
\item $\pi$ is the group of a fibred $1$-knot ;
\item$\pi$ has a subgroup $\Delta\cong\mathbb{Z}^2$ which contains a normal generator 
and such that $(\pi,\Delta)$ is a $PD_3$-pair, and $\pi'$ is finitely generated;
\item $\pi$ has a subgroup
$\Delta=\langle\mu,\lambda\rangle\cong\mathbb{Z}^2$, where $\mu$
is a normal generator and $\lambda\in\pi''$, and such that
$(\pi',\langle\lambda\rangle)$ is a $PD_2$-pair.
\end{enumerate}
\end{theorem}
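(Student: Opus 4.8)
The plan is to prove the cycle $(1)\Rightarrow(2)\Rightarrow(3)\Rightarrow(1)$, from which all the equivalences follow. First note that deficiency $1$ and weight $1$ force $\pi/\pi'\cong\mathbb{Z}$: weight $1$ makes $\pi^{ab}$ cyclic, while deficiency $1$ forces the rank of $\pi^{ab}$ to be at least $1$, so $\pi^{ab}\cong\mathbb{Z}$; write $\phi\colon\pi\twoheadrightarrow\mathbb{Z}$ for the abelianization, with $\ker\phi=\pi'$. For $(1)\Rightarrow(2)$: if $\pi$ is the group of a fibred $1$-knot $k$ then the exterior $X(k)$ is the mapping torus of a self-homeomorphism of a compact orientable surface $F$ with connected boundary, hence an aspherical compact $3$-manifold with $\partial X(k)$ a torus; thus $(\pi,\Delta)$ is a $PD_3$-pair with $\Delta=\pi_1(\partial X(k))\cong\mathbb{Z}^2$ containing the meridian $\mu$, which normally generates $\pi$, while $\pi'=\pi_1(F)$ is free of finite rank.

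For $(2)\Rightarrow(3)$: given a $PD_3$-pair $(\pi,\Delta)$ with $\Delta\cong\mathbb{Z}^2$ containing a normal generator $\mu$ and with $\pi'$ finitely generated, $\phi(\mu)$ generates $\mathbb{Z}$, so $\phi|_\Delta$ is onto and $\Delta\cap\pi'$ is an infinite cyclic direct summand $\langle\lambda\rangle$ of $\Delta$ with $\Delta=\langle\mu,\lambda\rangle$. Since $\pi'$ is finitely generated, the structure theory of infinite cyclic covers of $PD_3$-pairs (the pair analogue of the Bieri--Strebel results) shows that $(\pi',\langle\lambda\rangle)$ is a $PD_2$-pair; its orientation character is the restriction to $\pi'$ of that of $(\pi,\Delta)$, which is trivial on $\pi'$ since any homomorphism to $\mathbb{Z}/2\mathbb{Z}$ kills $\pi'$. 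Hence this $PD_2$-pair is orientable, so by the classification of $PD_2$-pairs $\pi'\cong\pi_1(F)$ for a compact orientable surface $F$ with connected boundary, with $\lambda$ corresponding to $[\partial F]$; as $[\partial F]$ is a product of commutators of a free generating set, $\lambda\in\pi''$.

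For $(3)\Rightarrow(1)$: by the classification of $PD_2$-pairs, $(\pi',\langle\lambda\rangle)$ is the group pair of a compact surface $F$ with connected boundary, $\lambda$ corresponding to $[\partial F]$, and since $\lambda\in\pi''$ this surface is orientable; so $\pi'$ is free and $F$ has genus $g\geq1$, for otherwise $\pi'$ is cyclic and $\lambda\in\pi''=1$, contradicting $\Delta\cong\mathbb{Z}^2$. As $\Delta=\langle\mu,\lambda\rangle$ is abelian, conjugation by $\mu$ is an automorphism $\theta$ of $\pi'$ with $\theta(\lambda)=\lambda$; since $\theta$ preserves the peripheral structure it is induced by a self-homeomorphism $h$ of $F$, which may be taken orientation-preserving and the identity on $\partial F$. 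The mapping torus $M=M_h$ is then an orientable compact $3$-manifold with $\partial M$ a torus and $\pi_1(M)\cong\pi'\rtimes_\theta\mathbb{Z}\cong\pi$; as $\lambda$ is not a proper power in the free group $\pi'$, its centralizer in $\pi$ is exactly $\Delta$, which one identifies with $\pi_1(\partial M)$, so $\mu$ is a slope on $\partial M$. Filling $\partial M$ along $\mu$ gives a closed $3$-manifold $\Sigma$ with $\pi_1(\Sigma)\cong\pi/\langle\langle\mu\rangle\rangle=1$, so $\Sigma\cong S^3$ by the Poincar\'e Conjecture; the core of the filling solid torus is a knot $k$ with exterior $M$ and group $\pi$. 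Since $M$ fibres over $S^1$ with fibre $F$, while $[\partial F]=[\lambda]=0$ in $H_1(M)$ and $\{\mu,\lambda\}$ spans $H_1(\partial M)$, the curve $\partial F$ is a longitude and $k$ is a fibred $1$-knot.

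I expect the real content to be concentrated in $(2)\Rightarrow(3)$: passing from mere finite generation of $\pi'$ to a genuine $PD_2$-pair structure on $(\pi',\langle\lambda\rangle)$. This is where one must invoke the theory of $PD_n$-pairs and their infinite cyclic covers together with the determination of $PD_2$-groups and pairs, and it is the step in which the argument is essentially a repackaging of Theorem 9.2.3 of \cite{Ne} and Theorem 5 of \cite{Th}. A second essential input, in $(3)\Rightarrow(1)$, is Perelman's proof of the Poincar\'e Conjecture, used to identify $\Sigma$ with $S^3$; without it one recovers only a fibred knot in a homotopy $3$-sphere. The remaining verifications --- that $\Delta=\pi_1(\partial M)$, that $M$ is orientable, and the bookkeeping with meridians and longitudes --- are routine.
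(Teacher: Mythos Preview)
Your proof follows the same cycle $(1)\Rightarrow(2)\Rightarrow(3)\Rightarrow(1)$ as the paper, and the arguments for $(1)\Rightarrow(2)$ and $(3)\Rightarrow(1)$ are essentially identical to the paper's (yours is in fact more careful about orientability and about identifying $\Delta$ with $\pi_1(\partial M)$). The one substantive difference is in $(2)\Rightarrow(3)$.

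The paper argues this step by first using the hypothesis $\mathrm{def}(\pi)=1$ together with finite generation of $\pi'$ to conclude that $\pi'$ is \emph{free}, hence of type $FP$, and then applies a Shapiro's Lemma argument (as in Theorem~1.19 of \cite{Hi}) to obtain the $PD_2$-pair structure on $(\pi',\langle\lambda\rangle)$. You instead invoke a Bieri--Strebel type theorem for infinite cyclic covers of $PD_3$-pairs directly from finite generation of $\pi'$. This is a legitimate alternative strategy, but there is a small gap: the general Bieri--Strebel result requires the kernel to be of type $FP$ (or at least $FP_2$), not merely finitely generated, in order to conclude Poincar\'e duality in one dimension lower. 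You have not used $\mathrm{def}(\pi)=1$ anywhere in this implication, and without it there is no a~priori reason for $\pi'$ to be $FP_2$. The cleanest fix is exactly the paper's: observe that $\mathrm{def}(\pi)=1$ and $\pi'$ finitely generated force $\pi'$ to be free, after which either your Bieri--Strebel invocation or the paper's Shapiro argument goes through.

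A minor further difference: the paper deduces $\lambda\in\pi''$ algebraically from $[\mu,\lambda]=1$ and $\pi/\pi'\cong\mathbb{Z}$ (implicitly using that $\pi'/\pi''$ is free abelian of finite rank, so that $t-1$ surjective implies $t-1$ injective), whereas you deduce it geometrically from the fact that $[\partial F]$ is a product of commutators in an orientable surface group. Both are correct; yours is arguably more transparent.
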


\begin{proof}
If $\pi\cong\pi{k}$, where $k$ is a nontrivial 1-knot,
then $X(k)$ is aspherical, $\partial{X(k)}$ is an incompressible torus,
and $\pi$ is normally generated by a meridian on $\partial{X(k)}$.
Hence $\Delta=\pi_1(\partial{X(k)})\cong\mathbb{Z}^2$ contains a normal generator
and $(\pi,\Delta)$  is a $PD_3$-pair. 
If $X(k)$ fibres over $S^1$ with fibre $F$ then $\pi'\cong\pi_1(F)$ is finitely generated.
Thus  (1) implies (2).

If (2) holds then $\pi/\pi'\cong\mathbb{Z}$, and so $T$ has a basis $\mu,\lambda$, where 
$\mu$ is a normal generator and $\lambda$ generates $T\cap\pi'\cong\mathbb{Z}$.
In fact, $\lambda$ is in $\pi''$, since $\mu$ commutes with $\lambda$ and $\pi/\pi'\cong\mathbb{Z}$.
Since $\mathrm{def}(\pi)=1$ and $\pi'$ is finitely generated,
$\pi'$ is free, and so is $FP$.
Hence $(\pi',\langle\lambda\rangle)$ is a $PD_2$-pair,
by a Shapiro's Lemma argument, as in Theorem 1.19 of \cite{Hi}.
Thus (2) implies (3).

If (3) holds then $(\pi',\langle\lambda\rangle)$ is 
the fundamental group pair of a surface $S$ with boundary $\partial{S}=S^1$
\cite{EM}.
The group $\pi$ is a semidirect product  $\pi\cong\pi'\rtimes_\theta\mathbb{Z}$,
where the monodromy $\theta$ fixes $\lambda$.
Hence it is realized by a self-homeomorphism $f$ of $S$,
by Nielsen's Theorem.
(See Theorem 5.7.1 of \cite{ZVC}.)
We may adjoin a solid torus to the mapping torus $M(f)$
to get a homotopy 3-sphere containing a knot with exterior $M(f)$.
Thus $\pi$ is the group of a fibred 1-knot, 
by the validity of the Poincar\'e Conjecture.
\end{proof}

\begin{cor}
A nontrivial $2$-knot with group $\pi$ and meridian $\mu$  
is $s$-concordant to the Artin spin of a fibred $1$-knot 
if and only if $\pi'$ is finitely generated, 
$C_\pi(\mu)\cong\mathbb{Z}^2$ 
and $(\pi,C_\pi(\mu))$ is a $PD_3$-pair.
\end{cor}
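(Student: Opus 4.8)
The plan is to derive the equivalence from Theorems 5 and 6, taking $\Delta=C_\pi(\mu)$ in the role of the subgroup $\Delta$ of Theorem 6.

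For the reverse implication, assume $\pi'$ is finitely generated, $\Delta:=C_\pi(\mu)\cong\mathbb{Z}^2$, and $(\pi,\Delta)$ is a $PD_3$-pair. Since $K$ is nontrivial we may assume $\pi\not\cong\mathbb{Z}$; then $\mu$ is a normal generator, so $\pi$ has weight $1$, $\Delta$ contains a normal generator, and $\pi/\pi'\cong\mathbb{Z}$. First we check that $\mathrm{def}(\pi)=1$. Being a $2$-knot group, $\pi$ is finitely presentable, hence so is $\pi'$ (it is finitely generated and normal with infinite cyclic quotient), so $M(K)'$ is homotopy equivalent to a $PD_3$-complex by Theorem 4.5 of \cite{Hi}; in particular $\pi'$ is $FP$. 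As $\mu$ commutes with a generator $\lambda$ of $\Delta\cap\pi'\cong\mathbb{Z}$, the Shapiro's lemma argument of Theorem 1.19 of \cite{Hi} shows $(\pi',\langle\lambda\rangle)$ is a $PD_2$-pair with $\mathbb{Z}$ boundary, whence $\pi'$ is free of finite rank and $\pi\cong\pi'\rtimes\mathbb{Z}$ has deficiency $1$. Now condition (2) of Theorem 6 holds with this $\Delta$, so $\pi$ is the group of a fibred $1$-knot $k$; moreover the construction in its proof realises $\mu$ itself as a meridian of $k$ (the image of a generator of the $\mathbb{Z}$ quotient). Since $\pi$ is then a $1$-knot group of deficiency $1$, Theorem 5(1) says that $K$ is determined up to TOP $s$-concordance, reflections and the Gluck ambiguity by $\pi$ together with the weight orbit of $\mu$. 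The spin $\sigma k$ is a $2$-knot with group $\pi$ and meridian $\mu$; being fibred with free commutator subgroup, it has a Seifert hypersurface with free fundamental group, hence is reflexive, so the Gluck ambiguity is vacuous for it. Allowing for the reflections (which only replace $k$ by its mirror or reverse, still a fibred $1$-knot), we conclude that $K$ is $s$-concordant to the Artin spin of a fibred $1$-knot.

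For the forward implication, suppose $K$ is $s$-concordant to $\sigma k$ for a fibred $1$-knot $k$. The $s$-concordance induces an isomorphism $\pi\cong\pi(\sigma k)=\pi k$ carrying the weight orbit of $\mu$ to that of the meridian of $\sigma k$, which is a meridian of $k$; so we may take $\pi=\pi k$ with $\mu$ a meridian of $k$. Since $k$ is fibred, $\pi'$ is the fundamental group of the fibre surface, a finitely generated free group. The exterior $X(k)$ is an aspherical compact $3$-manifold with incompressible torus boundary, through which $\mu$ runs, so $(\pi,\pi_1\partial X(k))$ is a $PD_3$-pair with $\pi_1\partial X(k)\cong\mathbb{Z}^2$. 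Writing $X(k)$ as the mapping torus of a homeomorphism $f$ of the fibre fixing the boundary, an element commutes with $\mu$ exactly when it lies in the subgroup generated by $\mu$ and the fixed subgroup of $f_\ast$; for $k$ prime this is $\pi_1\partial X(k)$, giving $C_\pi(\mu)\cong\mathbb{Z}^2$ and all three conditions. (For composite $k$ the fixed subgroup of $f_\ast$ contains a non-abelian free group, so the condition $C_\pi(\mu)\cong\mathbb{Z}^2$ implicitly restricts attention to prime $k$.)

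The step expected to be the main obstacle is the deficiency count in the reverse implication: extracting $\mathrm{def}(\pi)=1$, and the freeness of $\pi'$, from the $PD_3$-pair hypothesis, which is what makes Theorems 5 and 6 available --- together with checking that the meridian $\mu$ of $K$ and the meridian of the $1$-knot $k$ produced by Theorem 6 lie in the same $\mathrm{Aut}(\pi)$-orbit.
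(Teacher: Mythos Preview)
Your approach is the same as the paper's: invoke Theorems~5 and~6 and the reflexivity of Artin spins. The paper's proof is a single sentence (``This follows immediately from Theorems~5 and~6, since Artin spins of 1-knots are reflexive \cite{Gl}''), so you are supplying details it leaves implicit. Two remarks on those details:

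For reflexivity the paper cites Gluck \cite{Gl} directly for Artin spins, rather than your route through Seifert hypersurfaces with free fundamental group. Both are fine; the Gluck citation is more direct.

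Your work verifying $\mathrm{def}(\pi)=1$ is legitimate, since Theorem~6 is stated under that standing hypothesis, and the paper's one-line proof does not pause to check it. Your argument (the $PD_3$-pair hypothesis plus Shapiro's Lemma forces $(\pi',\langle\lambda\rangle)$ to be a $PD_2$-pair, hence $\pi'$ is free and $\pi\cong F_r\rtimes\mathbb{Z}$ has deficiency~1) is essentially the content of the implication $(2)\Rightarrow(3)$ in Theorem~6 run in a slightly different order, so you are not really adding a new idea, just unpacking what the paper compresses.

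Your parenthetical about composite $k$ in the forward direction is a genuine observation about the \emph{statement}: for $k=k_1\#k_2$ with both $k_i$ nontrivial the centralizer $C_\pi(\mu)$ contains the longitudes $\lambda_1,\lambda_2$ from each summand and is not $\mathbb{Z}^2$. The paper's terse proof does not address this, and the corollary should be read with prime $k$ in mind (consistent with the parenthetical remark before Theorem~6 that the peripheral torus is unique up to conjugacy only when the knot is prime). This is a fair point to flag, but it is not a defect in your argument.
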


\begin{proof}
This follows immediately from Theorems 5 and 6, 
since Artin spins of 1-knots are reflexive
\cite{Gl}.
\end{proof}


\end{document}